\def\q{\hfill\rule{1ex}{1ex}}
\def\0{\emptyset}
\def\q{\hfill\rule{1ex}{1ex}}
\def\n{\noindent}
\newtheorem{definition}{Definition}
\newtheorem{theorem}{Theorem}
\newtheorem{lemma}{Lemma}
\newtheorem{corollary}{Corollary}
\newtheorem{proposition}{Proposition}
\newtheorem{conjecture}{Conjecture}
\newtheorem{problem}{Problem}
\begin{document}
\title{\bf The anti-Ramsey number of $C_{3}$ and $C_{4}$ in the complete $r$-partite graphs}

\author[1,2,4]{{\small\bf Chunqiu Fang}\thanks{ fcq15@tsinghua.org.cn, supported in part by CSC(No. 201806210164) and NSFC (No. 11771247).}}
\author[2]{{\small\bf Ervin Gy\H ori}\thanks{ gyori.ervin@renyi.mta.hu, supported in part by the National Research, Development and Innovation Office NKFIH, grants K116769, K117879, K126853 and K132696.}}
\author[3,5]{{\small\bf Binlong Li}\thanks{ libinlong@mail.nwpu.edu.cn.}}
\author[2,3,5]{{\small\bf Jimeng Xiao}\thanks{ xiaojimeng@mail.nwpu.edu.cn, supported in part by CSC(No. 201706290171).}}

\affil[1]{Department of Mathematical Sciences, Tsinghua University, Beijing 100084, China }
\affil[2]{Alfr\'ed R\'enyi Institute of Mathematics, Hungarian Academy of Sciences, Re\'altanoda u.13-15, 1053 Budapest, Hungary}
\affil[3]{Department of Applied Mathematics, Northwestern Polytechnical University, Xi'an, China}
\affil[4]{Yau Mathematical Sciences Center, Tsinghua University, Beijing 10084, China}
\affil[5]{Xi'an-Budapest Joint Research Center for Combinatorics, Northwestern Polytechnical University, Xi'an, China}

\date{}
\maketitle

\begin{abstract}
A subgraph of an edge-colored graph is rainbow, if all of its edges have different colors. For a graph $G$ and a family $\mathcal{H}$ of graphs, the anti-Ramsey number $ar(G, \mathcal{H})$ is the maximum number $k$ such that there exists an edge-coloring of $G$ with exactly $k$ colors  without rainbow copy of any graph in $\mathcal{H}$. In this paper, we study the anti-Ramsey number of $C_{3}$ and $C_{4}$ in the complete $r$-partite graphs. For $r\ge 3$ and $n_{1}\ge n_{2}\ge \cdots\ge n_{r}\ge 1$, we determine $ ar(K_{n_{1}, n_{2}, \ldots, n_{r}},\{C_{3}, C_{4}\}), ar(K_{n_{1}, n_{2}, \ldots, n_{r}}, C_{3})$
 and $ar(K_{n_{1}, n_{2}, \ldots, n_{r}}, C_{4})$.

\end{abstract}

~\\
Keywords: Anti-Ramsey numbers, complete $r$-partite graphs, cycles.
~\\

\n{\large\bf 1. Introduction}
~\\

We call a subgraph of an edge-colored graph {\em rainbow}, if all the edges have different colors. For a graph $G$ and a family $\mathcal{H}$ of graphs, the anti-Ramsey number $ar(G, \mathcal{H})$ is the maximum number $k$ such that there exists an edge-coloring of $G$ with exactly $k$ colors  without rainbow copy of any graph in $\mathcal{H}$. If $\mathcal{H}=\{H\}$, then we denote $ar(G,\{H\})$ by $ar(G,H)$.  The study of anti-Ramsey theory was initiated by Erd\H os, Simonovits and S\'os \cite{Erdos} and considered in the classical case when $G=K_{n}$. Since then plentiful results were established for a variety of graphs $H$, including among cycles \cite{Alon, Montellano1}, cliques \cite{Erdos, Montellano2, Schiermeyer}, trees \cite{Jiang1, Jiang2}, and non-connected graphs \cite{Gilboa, Yuan}. Also, different graphs were considered as the underlying host graph $G$, including complete bipartite graphs \cite{Axenovich2}, complete hypergraphs \cite{Gu, Ozkahya},  complete split graphs \cite{Gorgol, Jin}, triangulations \cite{Chen1, Lan}. See Fujita, Magnant, and Ozeki \cite{Fujita} for a survey.

For cycles, Erd\H os, Simonovits and S\'os \cite{Erdos} showed that $ar(K_{n}, C_{3})=n-1$ and conjectured that $ar(K_{n}, C_{k})=(\frac{k-2}{2}+\frac{1}{k-1})n+O(1)$,  for all $n\ge k\ge 3$. Alon \cite{Alon} proved it for $k=4$ by showing that $ar(K_{n}, C_{4})=\left\lfloor\frac{4n}{3}\right\rfloor-1.$ Jiang, Schiermeyer and West \cite{Jiang3} proved this conjecture for $k\le 7$. Finally, Montellano-Ballesteros and Neumann-Lara \cite{Montellano1} completely proved this conjecture.

\begin{theorem}(\cite{Montellano1})
For all $n\ge k\ge 3$, let $n\equiv r_{k} \pmod{(k-1)}$, $0\le r_{k}\le k-2$, we have
$$ar(K_{n}, C_{k})=\left\lfloor\frac{n}{k-1}\right\rfloor\binom{k-1}{2}+\binom{r_{k}}{2}+\left\lceil\frac{n}{k-1}\right\rceil-1.$$
\end{theorem}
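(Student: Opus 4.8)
The plan is to establish matching lower and upper bounds. Write $n=q(k-1)+r_k$ with $q=\lfloor n/(k-1)\rfloor$ and $0\le r_k\le k-2$, and set $p=\lceil n/(k-1)\rceil$, so that $p=q$ when $r_k=0$ and $p=q+1$ otherwise. The target value is $N=q\binom{k-1}{2}+\binom{r_k}{2}+p-1$. For the lower bound I would exhibit a coloring with exactly $N$ colors and no rainbow $C_k$, and for the upper bound I would show that every coloring with $N+1$ colors contains a rainbow $C_k$; together these give $ar(K_n,C_k)=N$.

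For the lower bound, partition $V(K_n)$ into parts $V_1,\dots,V_p$, where $p-1$ of them have size $k-1$ and the last has size $r_k$ (all of size $k-1$ when $r_k=0$). Color the edges inside the parts rainbow, using a private color for each such edge; this uses $q\binom{k-1}{2}+\binom{r_k}{2}$ colors. Then for $i=2,\dots,p$ introduce one new color $c_i$ and assign it to every edge between $V_i$ and $V_1\cup\cdots\cup V_{i-1}$, which adds the remaining $p-1$ colors. To see that no rainbow $C_k$ appears, note that each part has fewer than $k$ vertices, so any copy of $C_k$ meets at least two parts; if $V_m$ is the part of largest index that it meets, then the cycle crosses the cut between $V_m$ and $V_1\cup\cdots\cup V_{m-1}$ an even and positive number of times, and all of those crossing edges carry the single color $c_m$. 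Hence at least two edges repeat a color, so the copy is not rainbow, and $ar(K_n,C_k)\ge N$.

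For the upper bound I would argue by induction on $n$, using the Erd\H os--Gallai theorem as the quantitative engine. Given a coloring with $N+1$ colors, pick one edge of each color to form a representing subgraph $R$ with $N+1$ edges, whose edges are automatically rainbow. Since $q\binom{k-1}{2}+\binom{r_k}{2}$ is exactly the Erd\H os--Gallai value $\mathrm{ext}(n,P_k)$ for the maximum number of edges in a graph containing no path on $k$ vertices, and $N+1$ exceeds it by $p$, the subgraph $R$ already contains a rainbow path on $k$ vertices, with the surplus of $p=\lceil n/(k-1)\rceil$ colors providing slack to be exploited. The inductive step is calibrated by the identity $f(n)-f(n-(k-1))=\binom{k-1}{2}+1$, where $f(n)=N$: one seeks a set $S$ of $k-1$ vertices whose deletion destroys at most $\binom{k-1}{2}+1$ colors, so that the restriction to $K_{n-(k-1)}$ still uses more than $f(n-(k-1))$ colors and the induction hypothesis produces a rainbow $C_k$ there. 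The base cases are the regime of bounded $q$ (a single full part plus remainder), checked directly, with the classical instance $ar(K_n,C_3)=n-1$ and the small-$k$ cases of Jiang--Schiermeyer--West serving as sanity checks.

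The main obstacle is the upper bound, and within it two intertwined difficulties. First, the representing-subgraph together with Erd\H os--Gallai only guarantees a long rainbow path, or at best a rainbow cycle of length at least $k$, whereas the statement demands a rainbow cycle of length exactly $k$; bridging this gap calls for a rotation--exchange analysis that shortens the cycle while keeping it rainbow and spends the reserve of $p$ surplus colors to ensure that some closing chord receives a color distinct from all those already used. Second, one must show that the economical deletion set $S$ actually exists: if instead every $(k-1)$-set of vertices carried many colors that appear nowhere else, then this very richness would have to be converted directly into a rainbow $C_k$. Reconciling these two regimes, while carrying the bookkeeping of the remainder $r_k$ and of the floor and ceiling terms consistently through the induction, is where essentially all of the work lies.
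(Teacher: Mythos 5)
First, note that the paper you are working against does not prove this statement at all: it is quoted as Theorem 1 from Montellano-Ballesteros and Neumann-Lara \cite{Montellano1}, so the only meaningful comparison is with that original (long and intricate) proof. Your lower bound is correct and complete, and it is exactly the standard extremal coloring: partition into $\lceil n/(k-1)\rceil$ parts of size at most $k-1$, rainbow inside the parts, one fresh color on each successive cut, with the largest-index part met by a $C_k$ forcing a repeated cut color. Your arithmetic ($N=q\binom{k-1}{2}+\binom{r_k}{2}+p-1$, and the calibration $f(n)-f(n-(k-1))=\binom{k-1}{2}+1$) is also right.

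The genuine gap is that your upper bound is a plan, not a proof, and the two steps you defer are precisely the entire content of the theorem. First, a representing subgraph with $\mathrm{ex}(n,P_k)+p$ edges does give you a rainbow path on $k$ vertices (via Erd\H os--Gallai, with the exact value you quote being the Faudree--Schelp/Kopylov refinement), but no mechanism in your sketch converts this into a rainbow cycle of length \emph{exactly} $k$: the closing chord's color is not under your control, rotation--exchange moves can destroy rainbowness, and the surplus of $p$ colors is a global count that does not localize to the particular path you have found. This difficulty is not incidental --- it is why Jiang, Schiermeyer and West only managed $k\le 7$ and why the general case required the substantial machinery of \cite{Montellano1}. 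Second, your proposed dichotomy (``either some $(k-1)$-set $S$ kills at most $\binom{k-1}{2}+1$ colors, enabling induction, or every such set carries many private colors and this richness yields a rainbow $C_k$'') is asserted with no argument for the second horn; nothing in the proposal shows how an abundance of privately colored edges near every $(k-1)$-set forces a cycle of exact length $k$, and the base cases in the bounded-$q$ regime are likewise unverified. As it stands, the proposal establishes $ar(K_n,C_k)\ge N$ and correctly identifies where the difficulty lies, but it does not prove the matching upper bound.
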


Axenovich, Jiang and K\" undgen \cite{ Axenovich2} considered the even cycles in complete bipartite graphs and proved the following result.

\begin{theorem}(\cite{Axenovich2})
For $n\ge m\ge 1$ and $k\ge 2$,
\begin{align} \nonumber
ar(K_{m,n}, C_{2k})=
 \begin{cases}
(k-1)(m+n)-2(k-1)^{2}+1,\,& m\ge 2k-1;\\
(k-1)n+m-(k-1),\,& k-1\le m\le 2k-1;\\
mn, & m\le k-1.
 \end{cases}
 \end{align}

\end{theorem}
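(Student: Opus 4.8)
The plan is to prove matching lower and upper bounds, splitting according to the three ranges of $m$. Throughout write $X,Y$ for the two colour classes with $|X|=m\le n=|Y|$, and recall that a copy of $C_{2k}$ in $K_{m,n}$ must use exactly $k$ vertices from each side. In particular $K_{m,n}$ contains no $C_{2k}$ whatsoever when $m\le k-1$, so every edge may then be given its own colour; this already settles the third line, where $ar=mn$. Hence for the remaining two ranges I may assume $m\ge k$, and I will show separately that the stated value is achievable (a colouring with that many colours and no rainbow $C_{2k}$) and optimal.

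For the \textbf{lower bound} I would give an explicit colouring in each range. When $k-1\le m\le 2k-1$, fix $A\subseteq X$ with $|A|=k-1$, assign every edge between $A$ and $Y$ its own colour (giving $(k-1)n$ colours), and colour all edges incident to each remaining vertex of $X$ with a single new colour, one colour per vertex (a further $m-(k-1)$ colours). Since any $C_{2k}$ uses $k>|A|$ vertices of $X$, it meets some $x\notin A$, whose two cycle-edges share $x$'s colour, so the cycle is not rainbow; this realises $(k-1)n+m-(k-1)$. When $m\ge 2k-1$ I would instead rainbow-colour two disjoint complete bipartite graphs, namely $A\subseteq X$ ($|A|=k-1$) joined to $Y\setminus B$ and $B\subseteq Y$ ($|B|=k-1$) joined to $X\setminus A$, and give all other edges one extra colour. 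Labelling every vertex of $A\cup(Y\setminus B)$ by $P$ and every vertex of $B\cup(X\setminus A)$ by $Q$, an edge is rainbow exactly when its two ends carry equal labels; as the number of label changes around any cycle is even, a rainbow $C_{2k}$ would need all its vertices of one label, forcing $k$ vertices of $X$ into $A$ or $k$ of $Y$ into $B$, which is impossible. This gives $(k-1)(m+n)-2(k-1)^2+1$, and the two expressions are easily checked to coincide at $m=2k-1$.

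For the \textbf{upper bound}, suppose $c$ is a colouring of $K_{m,n}$ with exactly $N$ colours and no rainbow $C_{2k}$, and let $R$ be a representing subgraph obtained by keeping one edge of each colour, so $|E(R)|=N$. Since $R$ is rainbow, it contains no $C_{2k}$ at all. The first step is to upgrade this: invoking the bipartite path–Tur\'an bounds of Gy\'arf\'as, Rousseau and Schelp, if $N$ exceeded the claimed value then $R$ would already contain a path $v_1v_2\cdots v_{2k}$ on $2k$ vertices. Its endpoints lie on opposite sides, so $v_1v_{2k}$ is an edge of $K_{m,n}$; were its colour new we would close a rainbow $C_{2k}$, so $c(v_1v_{2k})$ must repeat a colour already on the path. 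The goal is then to show that this coincidence cannot be sustained simultaneously for enough closing chords, by rerouting the path through rotations that fix one endpoint and comparing the forced colour repetitions against the surjectivity of $c$ and the $C_{2k}$-freeness of $R$.

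The \textbf{main obstacle} is precisely this closing step, and it behaves differently in the two nontrivial ranges. For $m\ge 2k-1$ the target equals $ex(K_{m,n},P_{2k})+1$, so the path bound is essentially sharp and only a short closing argument is required. For $k-1\le m\le 2k-1$ the extremal colouring is genuinely different: it uses $m-(k-1)$ separate monochromatic vertex-classes rather than one global extra colour, so merely locating a rainbow $P_{2k}$ in $R$ is cheap and does not by itself produce a rainbow cycle. Here I would argue instead by induction on $m$. The per-step increment of the formula is $k-1$ in the first range and only $1$ in the second, so it suffices to find a vertex $u\in X$ incident to at most that many \emph{private} colours (colours all of whose edges meet $u$): deleting $u$ and applying the bound for $K_{m-1,n}$ then closes the induction. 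A vertex carrying more private colours should, together with a rainbow structure in $K_{m,n}-u$ supplied by the path bound, complete a rainbow $C_{2k}$. Establishing that such a vertex always exists, and handling the base cases at $m=k$ and at the boundary $m=2k-1$ where the two formulas meet, is the delicate heart of the proof.
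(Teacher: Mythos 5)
First, a point of comparison: the paper itself contains no proof of this statement. It is Theorem~2, quoted as background from Axenovich, Jiang and K\"undgen \cite{Axenovich2}, so your attempt can only be measured against the original argument there. On the constructive side your work is sound and complete: the observation that $K_{m,n}$ has no $C_{2k}$ at all when $m\le k-1$ settles the third case; the middle-range colouring (rainbow on $A\times Y$ with $|A|=k-1$, one monochromatic star per vertex of $X\setminus A$) is exactly the right extremal colouring and your count $(k-1)n+m-(k-1)$ is correct; and your top-range construction with the two disjoint rainbow blocks $A\times(Y\setminus B)$ and $(X\setminus A)\times B$ plus one spare colour is valid --- the parity argument (a cycle crosses the label boundary an even number of times, while a rainbow cycle can afford at most one edge of the spare colour, hence zero) correctly forces $k$ vertices of $X$ into $A$ or $k$ of $Y$ into $B$, and the two formulas do agree at $m=2k-1$.

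The genuine gap is the upper bound, and you have in effect conceded it yourself. Taking a representing subgraph $R$, noting it is $C_{2k}$-free, and invoking the Gy\'arf\'as--Rousseau--Schelp bound to extract a rainbow path $v_1v_2\cdots v_{2k}$ is fine as far as it goes, but the entire difficulty of the theorem is concentrated in the step you describe only as a ``goal'': the closing chord $v_1v_{2k}$ may repeat a colour of the path, and you must show this cannot happen for every way of rerouting. Your proposed remedy --- rotations fixing an endpoint, or an induction on $m$ after locating a vertex $u$ incident to at most $k-1$ (respectively at most $1$) private colours --- is a plausible program, but the existence of such a vertex is precisely what must be proved, and nothing in your text does so; as your own middle-range discussion shows, the extremal colouring makes rainbow copies of $P_{2k}$ plentiful while every closing chord is blocked, so no soft counting argument can finish from the path alone. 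In \cite{Axenovich2} this closing step is the bulk of the proof: a long induction with a careful structural analysis of the colour classes (in the spirit of the ``starred colour'' bookkeeping the present paper uses for its Theorem~7), not a short rotation argument. As it stands, then, your proposal establishes the lower bounds in all three ranges but proves the upper bound only in the degenerate case $m\le k-1$.
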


A {\em complete split graph} $K_{n}+\overline{K_{s}}$ is a {\em join} of a complete graph $K_{n}$ and an empty graph $\overline{K_{s}}$,  that is the graph obtained from $K_{n}\cup \overline{K_{s}}$ by joining each vertex of $K_{n}$ with each vertex of $\overline{K_{s}}$.  Gorgol \cite{Gorgol} considered the cycles in complete split graphs and proved the following result.

\begin{theorem}(\cite{Gorgol})
For $n\ge 2$, $s\ge 1$,  $ar(K_{n}+\overline{K_{s}}, C_{3})=n+s-1.$
\end{theorem}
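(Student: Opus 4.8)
The plan is to prove the two inequalities $ar(K_n+\overline{K_s},C_3)\ge n+s-1$ and $ar(K_n+\overline{K_s},C_3)\le n+s-1$ separately, writing $A$ for the clique $K_n$ with vertices $a_1,\dots,a_n$ and $B$ for the independent set $\overline{K_s}$ with vertices $b_1,\dots,b_s$. The key structural observation I would record first is that, because $B$ is independent, every triangle of this graph lies either entirely inside $A$ or consists of one vertex $b\in B$ together with an edge $a_ia_j$ of $A$. Note also that the target value equals $(n+s)-1$, one less than the number of vertices, exactly as for a complete graph; indeed the case $s=1$ is literally $K_{n+1}$. This suggests reducing everything to the Erd\H{o}s--Simonovits--S\'os equality $ar(K_n,C_3)=n-1$ quoted in the introduction.

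For the lower bound I would exhibit a coloring with exactly $n+s-1$ colors and no rainbow triangle. On the clique $A$ use the standard extremal coloring: give edge $a_ia_j$ with $i<j$ the color $i$, which uses the $n-1$ colors $1,\dots,n-1$ and makes every triangle $a_ia_ja_k$ ($i<j<k$) non-rainbow, since $a_ia_j$ and $a_ia_k$ both receive color $i$. Then for each $b_\ell\in B$ paint the entire star from $b_\ell$ to $A$ with a single fresh color $c_\ell$, introducing $s$ further colors. This totals $n+s-1$ colors; a triangle inside $A$ is non-rainbow by the above, and a triangle $a_ia_jb_\ell$ has its two $B$-edges $a_ib_\ell,a_jb_\ell$ sharing the color $c_\ell$, so it is non-rainbow as well.

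For the upper bound I would show that \emph{any} coloring $c$ of $K_n+\overline{K_s}$ with no rainbow triangle uses at most $n+s-1$ colors. First restrict $c$ to $A$: since $A=K_n$ and the restriction still has no rainbow triangle, it uses at most $ar(K_n,C_3)=n-1$ colors; call this palette $C_A$. The heart of the argument is the claim that each $b\in B$ contributes at most one color outside $C_A$: if two edges $ba_i,ba_j$ carried distinct colors, neither lying in $C_A$, then together with $a_ia_j$ (whose color is in $C_A$, hence distinct from both) they would form a rainbow triangle $ba_ia_j$, a contradiction. Since every edge of the graph is either inside $A$ or a star-edge from some $b\in B$ to $A$, the full color set is $C_A\cup\bigcup_{b\in B}C_b$, where $C_b$ denotes the colors appearing at $b$ but outside $C_A$; hence the number of colors is at most $|C_A|+\sum_{b\in B}|C_b|\le (n-1)+s=n+s-1$.

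I expect the only real obstacle to be the bookkeeping in the upper bound: one must ensure each color is counted exactly once (colors reused across different stars, or already present in $C_A$, only help), and one must invoke $ar(K_n,C_3)=n-1$ at precisely the right moment as the base of the reduction. The triangle argument that caps each independent vertex at one new color is where the independence of $B$ is genuinely used — every edge at $b$ reaches the clique, so its potential ``partner'' edge $a_ia_j$ already lives in the part whose colors are accounted for — and getting this interface right is the crux. Combining the two bounds yields $ar(K_n+\overline{K_s},C_3)=n+s-1$.
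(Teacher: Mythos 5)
Your proof is correct, and it takes a genuinely different route from what the paper offers. Note first that the paper contains no proof of this statement at all: it is quoted from Gorgol \cite{Gorgol}. Within the paper it is instead subsumed by Theorem 6, since $K_{n}+\overline{K_{s}}=K_{s,1,\ldots,1}$ is a complete $(n+1)$-partite graph; substituting $n_{1}=s$, $n_{2}=\cdots=n_{n+1}=1$ into Theorem 6 gives $s+(r-2)=n+s-1$ in both parity cases. The paper's route to that general result is global and extremal: given a coloring with too many colors, take a representing subgraph, show it must contain a multipartite cycle (Lemma 4, via the edge bounds of Lemmas 2 and 3), and contract that cycle down to a rainbow triangle (Lemma 1). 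Your route is local and elementary: restrict the coloring to the clique $A$, invoke $ar(K_{n},C_{3})=n-1$, and cap each independent vertex $b$ at one color outside $C_{A}$ via the triangle $ba_{i}a_{j}$ — a correct argument, since two distinct non-$C_{A}$ colors at $b$ together with $c(a_{i}a_{j})\in C_{A}$ would be rainbow. What each approach buys: yours is shorter and self-contained for the split graph, but it leans crucially on the completeness of $A$ (the partner edge $a_{i}a_{j}$ is always present and its color is already accounted for), so it does not extend to general part sizes $n_{1}\ge n_{2}\ge\cdots\ge n_{r}$; the paper's machinery handles all complete multipartite graphs at once and, as a byproduct, describes the structure of extremal colorings. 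Two small points you should make explicit in a final writeup: the step ``the restriction to $A$ uses at most $ar(K_{n},C_{3})$ colors'' is immediate from the definition of $ar$ as a maximum (no color-merging argument is needed), but the equality $ar(K_{n},C_{3})=n-1$ comes from the ESS theorem only for $n\ge 3$, with $n=2$ being the trivial one-edge case; and in the final count one should observe, as you do, that every color appears either inside $A$ or at some $b\in B$ because $B$ is independent, which is exactly where the hypothesis on $B$ enters.
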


They also gave a lower bound and an upper bound for $ar(K_{n}+\overline{K_{s}}, C_{4})$ and conjectured that the exact value is closer to the lower bound.

\begin{theorem}(\cite{ Gorgol})
For $s\ge n\ge 4$, $\left\lfloor\frac{4n}{3}\right\rfloor+s-1\le ar(K_{n}+\overline{K_{s}}, C_{4})\le \left\lfloor\frac{7n}{3}\right\rfloor+s-3.$

\end{theorem}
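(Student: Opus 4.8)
The plan is to prove the two bounds separately, using as black boxes the values $ar(K_n,C_4)=\lfloor 4n/3\rfloor-1$ (Alon) and $ar(K_{n,s},C_4)=n+s-1$, the latter being the $k=2$ case of Theorem 2, which applies since the smaller side has $n\ge 4\ge 3$. Throughout write $A$ for the vertex set of the $K_n$ and $B$ for that of the $\overline{K_s}$, so the edges split into the \emph{clique edges} inside $A$ and the \emph{bipartite edges} between $A$ and $B$, with no edge inside $B$. Every $C_4$ is then of exactly one of three types: entirely inside $A$; three vertices in $A$ and one in $B$ (two clique edges $a_1a_2,a_2a_3$ and two bipartite edges $a_3b,ba_1$); or two vertices in each side, of the form $a_1b_1a_2b_2$ (four bipartite edges).

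For the lower bound I would take an optimal rainbow-$C_4$-free colouring of the clique on $A$ with $\lfloor 4n/3\rfloor-1$ colours, and assign to each $b\in B$ one fresh colour used on all $n$ edges from $b$ to $A$. This uses $(\lfloor 4n/3\rfloor-1)+s=\lfloor 4n/3\rfloor+s-1$ colours, and no $C_4$ is rainbow: a type-1 cycle is handled by the clique colouring; in $a_1b_1a_2b_2$ the two edges at $b_1$ repeat a colour; and in a type-2 cycle the two bipartite edges $a_3b,ba_1$ both lie at the single $B$-vertex and hence repeat a colour. This yields the claimed lower bound.

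For the upper bound, fix a rainbow-$C_4$-free colouring with $k$ colours. Let $k_A$ be the number of colours appearing on at least one clique edge, and let $P$, of size $k_B$, be the set of colours appearing \emph{only} on bipartite edges, so $k=k_A+k_B$. Restricting to $A$ is rainbow-$C_4$-free, so $k_A\le ar(K_n,C_4)=\lfloor 4n/3\rfloor-1$; restricting to the bipartite graph is rainbow-$C_4$-free, so the number of colours on bipartite edges is at most $ar(K_{n,s},C_4)=n+s-1$, whence $k_B\le n+s-1$. If some colour occurs on both a clique edge and a bipartite edge, it is not counted in $P$, so $k_B\le n+s-2$ and then $k\le(\lfloor 4n/3\rfloor-1)+(n+s-2)=\lfloor 7n/3\rfloor+s-3$, as required.

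The remaining and main difficulty is the case where the clique palette and the bipartite palette are \emph{disjoint}; there the two crude bounds only give $k\le\lfloor 7n/3\rfloor+s-2$, and one further colour must be saved. The coupling between the regions comes exactly from the type-2 cycles: for every $b\in B$ and every clique path $a_1a_2a_3$, the cycle $a_1a_2a_3b$ must be non-rainbow, and with disjoint palettes this forces $c(a_1a_2)=c(a_2a_3)$ or $c(ba_1)=c(ba_3)$. The plan is to exploit this dichotomy quantitatively: if the clique colouring is rich enough to approach $\lfloor 4n/3\rfloor-1$ colours it has few monochromatic paths of length two, which forces, for each $b$, many coincidences $c(ba_1)=c(ba_3)$ and thereby collapses the bipartite palette below $n+s-1$. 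Making this trade-off precise—showing $k_A$ and $k_B$ cannot simultaneously attain their maxima—is the crux, and I expect it to require the structural description of the extremal colourings attaining $ar(K_{n,s},C_4)=n+s-1$ together with a careful count of the type-2 cycles.
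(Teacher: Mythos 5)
This statement is Theorem~4 of the paper, which is quoted from Gorgol \cite{Gorgol} without proof, so there is no in-paper argument to compare against; your proposal must stand on its own. Your lower bound does: the construction (an extremal rainbow-$C_4$-free coloring of $K_n$ on $A$, plus one fresh color on the whole star of each vertex of $\overline{K_s}$) is correct and complete, and your case check over the three types of $4$-cycles is right. The shared-palette half of your upper bound is also correct, including the arithmetic $(\lfloor 4n/3\rfloor-1)+(n+s-2)=\lfloor 7n/3\rfloor+s-3$ and the legitimate invocations of $ar(K_n,C_4)$ and of Theorem~2 with $k=2$ (valid since $\min(n,s)=n\ge 4\ge 3$).

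The genuine gap is the one you flag yourself: the disjoint-palette case, where your two black-box bounds only give $k\le\lfloor 7n/3\rfloor+s-2$, is left as a plan rather than a proof --- ``I expect it to require the structural description of the extremal colourings'' is an admission that the decisive step is open, so the proposal as written does not establish the upper bound. It is worth noting that this case needs far less machinery than the quantitative trade-off you envision; it collapses outright. With disjoint palettes, non-rainbowness of each cycle $a_1a_2a_3ba_1$ ($a_i\in A$, $b\in B$) forces $c(ba_1)=c(ba_3)$ whenever $c(a_1a_2)\ne c(a_2a_3)$, exactly as you observe. Form the auxiliary graph $H$ on $A$ joining $a_1,a_3$ whenever some middle vertex $a_2$ makes $a_1a_2a_3$ rainbow. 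If $H$ is disconnected, say with sides $A_1,A_2$, then for every $x\in A$ and $u\in A_1$, $v\in A_2$ (taking $x$ as the middle vertex) one gets $c(xu)=c(xv)$, and a short check including the case $|A_1|=1$ shows every clique star is monochromatic, hence the whole clique is monochromatic and $k\le 1+(n+s-1)=n+s$. If $H$ is connected, then for each $b\in B$ the map $a\mapsto c(ba)$ is constant on $A$, so the bipartite edges carry at most $s$ colors and $k\le k_A+s\le\lfloor 4n/3\rfloor+s-1$. Since $\lfloor 7n/3\rfloor=n+\lfloor 4n/3\rfloor$, both bounds are at most $\lfloor 7n/3\rfloor+s-3$ for $n\ge 4$. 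So your decomposition is a workable route to the theorem, but the submitted argument proves only the lower bound and half of the upper bound; the remaining case, though closable as above, is missing.
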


In section 2, we study the anti-Ramsey number of $\{C_{3}, C_{4}\}$ and $C_{3}$ in the complete $r$-partite graphs and prove the following two theorems.

\begin{theorem}
For $r\ge 3$ and $n_{1}\ge n_{2}\ge \cdots\ge n_{r}\ge 1$, we have
$$ ar(K_{n_{1}, n_{2}, \ldots, n_{r}}, \{C_{3},C_{4}\})=n_{1}+n_{2}+\cdots+n_{r}-1.$$
\end{theorem}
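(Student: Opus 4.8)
The plan is to establish the two matching bounds separately; throughout write $N = n_1 + n_2 + \cdots + n_r$ for the number of vertices and set $G = K_{n_1, n_2, \ldots, n_r}$.

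For the lower bound $ar(G, \{C_3, C_4\}) \ge N - 1$, I would exhibit an explicit colouring. Since $G$ is connected (as $r \ge 3$ and every part is nonempty), fix a \emph{connected ordering} $v_1, v_2, \ldots, v_N$ of $V(G)$, i.e.\ one in which every $v_i$ with $i \ge 2$ has a neighbour $v_j$ with $j < i$; such an ordering is obtained by growing a spanning tree. Colour each edge $v_iv_j$ with $i > j$ by the colour $i$ of its larger endpoint. The connectedness of the ordering guarantees that each of the colours $2, 3, \ldots, N$ is actually used, so the colouring uses exactly $N - 1$ colours. Moreover it contains no rainbow cycle at all: in any cycle the vertex of largest index is incident with two cycle-edges, both running to smaller-indexed vertices and hence both receiving that largest index as their colour. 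In particular there is no rainbow $C_3$ and no rainbow $C_4$, which gives the lower bound.

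For the upper bound I would prove the stronger statement that a colouring of $G$ with no rainbow $C_3$ and no rainbow $C_4$ has no rainbow cycle whatsoever; the bound then follows at once, since choosing one edge of each colour produces a subgraph $R$ in which every cycle would be rainbow, so $R$ is a forest on at most $N$ vertices and the number of colours equals $|E(R)| \le N - 1$. To prove the stronger statement, suppose a rainbow cycle exists and let $C = u_1 u_2 \cdots u_\ell$ be a shortest one; by hypothesis $\ell \ge 5$, and I would derive a shorter rainbow cycle for a contradiction. The mechanism is chord-shortening: if a chord $u_{i-1}u_{i+1}$ happens to exist in $G$, then either the triangle $u_{i-1}u_iu_{i+1}$ is rainbow (a forbidden rainbow $C_3$) or $c(u_{i-1}u_{i+1})$ duplicates one of the cycle colours $c(u_{i-1}u_i), c(u_iu_{i+1})$, in which case deleting $u_i$ and inserting this chord yields a rainbow cycle of length $\ell - 1$, contradicting minimality.

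The main obstacle is precisely that the $r$-partite structure may block every short chord $u_{i-1}u_{i+1}$, which happens exactly when $u_{i-1}$ and $u_{i+1}$ lie in a common part for all $i$; here I would invoke a parity argument. If $\ell$ is odd, the indices $1, 3, 5, \ldots$ sweep out all of $\mathbb{Z}_\ell$, forcing every $u_i$ into one part, which is impossible because a part spans no edges. If $\ell$ is even, then $C$ alternates between two parts $A$ and $B$, and now the length-three chord $u_iu_{i+3}$ always exists, its endpoints lying on opposite sides of the alternation. Forbidding a rainbow $C_4$ on $u_iu_{i+1}u_{i+2}u_{i+3}$ forces $c(u_iu_{i+3})$ to repeat one of the three colours $c(u_iu_{i+1}), c(u_{i+1}u_{i+2}), c(u_{i+2}u_{i+3})$, so deleting $u_{i+1}, u_{i+2}$ and inserting $u_iu_{i+3}$ produces a rainbow cycle of length $\ell - 2 \ge 4$ (note $\ell \ge 6$ here), again contradicting minimality. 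In every case the shortest rainbow cycle turns out to have length at most $4$, so once $C_3$ and $C_4$ are excluded no rainbow cycle survives, completing the upper bound. I expect the even-$\ell$ alternating case to be the only genuinely delicate point; the rest is bookkeeping.
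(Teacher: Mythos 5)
Your proposal is correct and follows essentially the same route as the paper: the same lower-bound colouring (colour every edge by its larger endpoint in an ordering where each vertex has an earlier neighbour, so exactly $N-1$ colours are used and the largest-indexed vertex of any cycle sees two equally coloured edges), and the same upper-bound mechanism of a representing subgraph plus chord-shortening of rainbow cycles, where the absence of a rainbow $C_3$ or $C_4$ forces the chord's colour to duplicate a deleted cycle edge. Your shortest-rainbow-cycle formulation with the parity dichotomy (odd length forces some distance-two chord; even length with all such chords blocked forces a two-part alternating cycle, handled by distance-three chords and the $C_4$ condition) is just a slightly more explicit repackaging of the paper's case split between multipartite cycles, shortened to $C_3$ via its Lemma 1, and cycles lying in two parts, shortened to $C_4$.
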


\begin{theorem}
For $r\ge 3$ and $n_{1}\ge n_{2}\ge \cdots\ge n_{r}\ge 1$, we have
 \begin{align}\nonumber
  ar(K_{n_{1}, n_{2}, \ldots, n_{r}}, C_{3})=
 \begin{cases}
 n_{1}n_{2}+n_{3}n_{4}+ \cdots+n_{r-2}n_{r-1}+n_{r}+\frac{r-1}{2}-1,& r\, \text{ is odd};\\
n_{1}n_{2}+n_{3}n_{4}+ \cdots+n_{r-1}n_{r}+\frac{r}{2}-1,& r \,\text{ is even}.
 \end{cases}
 \end{align}
\end{theorem}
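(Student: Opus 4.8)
The plan is to prove the two bounds separately: for the lower bound I would exhibit an explicit colouring, and for the upper bound I would argue by induction, peeling off one vertex at a time from a largest part while controlling how many colours that vertex can privately own.

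For the lower bound I group the parts into blocks. If $r$ is even, pair them as $(V_1,V_2),(V_3,V_4),\dots,(V_{r-1},V_r)$; if $r$ is odd, pair $(V_1,V_2),\dots,(V_{r-2},V_{r-1})$ and leave $V_r$ unpaired. Inside each block $V_{2i-1}\cup V_{2i}$ I colour all $n_{2i-1}n_{2i}$ edges with distinct fresh colours; since a block spans only two parts it is triangle-free, so this is safe. Between two blocks I use one colour for all edges joining them, chosen so that the induced colouring of the block graph (one super-vertex per block) is a rainbow-triangle-free colouring of a complete graph on $r/2$ (even case) resp.\ $(r-1)/2$ (odd case) vertices realising the maximum $ar(K_m,C_3)=m-1$ colours from the introduction. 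Finally, in the odd case I give each vertex $u\in V_r$ its own fresh colour $c_u$ on all edges incident to $u$. No triangle is then rainbow: a triangle meeting three distinct blocks is rainbow only if the block graph has a rainbow triangle, which is excluded; a triangle with two vertices in one block and one in another uses two inter-block edges of the same colour; and any triangle through a vertex $u\in V_r$ repeats $c_u$ on its two edges at $u$. Counting the colours gives exactly the claimed value in each parity.

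For the upper bound, fix a colouring of $G=K_{n_1,\dots,n_r}$ with no rainbow $C_3$ and $k$ colours. A first, insufficient, observation is that a representing subgraph (one edge per colour) must be triangle-free, since every triangle of it would be rainbow; but triangle-free subgraphs of $G$ can be far larger than the target value, so this alone is too weak. Instead I would induct on $\sum n_i$, removing a vertex $v$ from a largest part $V_1$ and controlling the number of \emph{private} colours at $v$, that is, colours used only on edges incident to $v$. Removing $v$ deletes exactly these, so $k=k'+|P_v|$, where $k'$ is the number of colours in $G-v$. The key lemma is $|P_v|\le n_2$. Indeed, if $|P_v|\ge 2$, take edges $vy,vy'$ with distinct private colours; were $y,y'$ in different parts, the triangle $vyy'$ would be rainbow (its third colour $c(yy')$ differs from both private colours, which occur only at $v$), so any two distinct private colours go to a common part. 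A short propagation argument then forces all private-coloured edges at $v$ to land in a single part $V_j$ with $j\ge 2$, whence $|P_v|\le n_j\le n_2$. With the inductive hypothesis $k'\le f(G-v)$, where $f$ denotes the target formula, this yields $k\le f(G-v)+n_2$.

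The base case is the complete graph $K_r$ (all parts of size $1$), where the required value is $r-1=ar(K_r,C_3)$, supplied by the known triangle result. The step then reduces to the arithmetic claim $f(G-v)+n_2\le f(n_1,\dots,n_r)$, i.e.\ that deleting a vertex of a largest part drops $f$ by at least $n_2$. When $n_1>n_2$ this holds with equality, as the leading product $n_1n_2$ becomes $(n_1-1)n_2$, and a direct check shows the same is true in all even cases even under ties. The main obstacle is the tie case $n_1=n_2$ with $r$ odd: after re-sorting, the reduced part can slide into the unpaired ``leftover'' slot, so $f$ may drop by as little as $1$ (already for $K_{2,2,2}$), and the crude bound $|P_v|\le n_2$ no longer suffices. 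Resolving this is the crux: one must show that a vertex with $|P_v|$ close to $n_2$ forces compensating coincidences among the colours on the remaining parts (making $k'$ correspondingly smaller), or equivalently strengthen the induction hypothesis and choose the peeled vertex so that its private-colour count matches the true decrease of $f$. I expect this parity-driven analysis, rather than the main peeling argument, to be where the real work lies.
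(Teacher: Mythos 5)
Your lower-bound construction is correct and is essentially the paper's: the paper realizes your ``rainbow-triangle-free colouring of the block graph'' concretely, by giving all edges from the $i$-th block to all later blocks a single new colour, and by giving each vertex of the unpaired part $V_r$ its own colour in the odd case; the counts agree. The genuine gap is in your upper bound, and it sits exactly where you flagged it. The inequality your induction needs, namely that deleting one vertex of $V_1$ drops the target value $f$ by at least $|P_v|$ with $|P_v|\le n_2$, is simply false in the tie case: already for $K_{2,2,2}$ one has $f(2,2,2)=6$ while $f(2,2,1)=5$, a drop of $1$ against the bound $|P_v|\le 2$, and re-sorting after deletion can slide the diminished part into the unpaired slot for all odd $r$. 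Your closing sentence (``strengthen the induction hypothesis or choose the peeled vertex so that its private-colour count matches the true decrease of $f$'') is not a proof step but a restatement of the open difficulty, so the proposal is an incomplete sketch rather than a proof.

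The paper avoids peeling altogether by strengthening your first, discarded observation about the representing subgraph $G$ (one edge per colour, so $G$ is rainbow with $k$ edges). You correctly noted that ``$G$ is triangle-free'' is too weak; the right invariant is that $G$ contains no \emph{multipartite} cycle at all, i.e., no cycle meeting at least three parts. Indeed, any multipartite cycle of length at least $4$ has a chord splitting it into two multipartite cycles whose edge sets overlap only in the chord, so at least one is rainbow; iterating yields a rainbow $C_3$ (the paper's Lemma 1). The theorem then reduces to a purely extremal statement (Lemma 4): a subgraph of $K_{n_1,\dots,n_r}$ with no multipartite cycle has at most $f(n_1,\dots,n_r)$ edges. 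This is proved structurally: an edge-maximum such graph is connected and each block is a complete bipartite graph inside two parts; the centre of any multipartite $P_3$ must be a cut vertex; after rerouting cut vertices to one chosen representative $x_i$ per part, the graph induced on $V\setminus\{x_1,\dots,x_r\}$ has no multipartite $P_3$ and hence, by an induction on $r$ using the rearrangement $n_1n_{i_0}+n_2n_{j_0}\le n_1n_2+n_{i_0}n_{j_0}$, at most $(n_1-1)(n_2-1)+(n_3-1)(n_4-1)+\cdots$ edges; adding at most one edge from each remaining vertex to the representatives and at most $r-1$ edges among the representatives gives exactly $f$. If you wanted to rescue your peeling plan you would in effect have to reprove this extremal lemma inside your induction, so adopting it directly is the efficient route.
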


In section 3, we generalize the theorem of Alon \cite{Alon} to complete $r$-partite graphs. We call two subgraphs $H_{1}$ and $H_{2}$ of $G$ are {\em independent} if they are vertex disjoint.

\begin{theorem}

For $r\ge 3$ and $n_{1}\ge n_{2}\ge\cdots\ge n_{r}\ge 1$, we have
$$ar(K_{n_{1}, n_{2}, \ldots, n_{r}}, C_{4})= n_{1}+n_{2}+\cdots+n_{r}+t-1,$$
where $t=\min\left\{\left\lfloor\frac{\sum_{i=1}^{r}n_{i}}{3}\right\rfloor, \left\lfloor\frac{\sum_{i=2}^{r}n_{i}}{2}\right\rfloor, \sum_{i=3}^{r}n_{i}\right\}$ is the maximum number of independent triangles of $K_{n_{1}, n_{2}, \ldots, n_{r}}$.

\end{theorem}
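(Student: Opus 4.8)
The plan is to prove the two inequalities $ar(K_{n_1,\dots,n_r},C_4)\ge N+t-1$ and $ar(K_{n_1,\dots,n_r},C_4)\le N+t-1$ separately, where $N=\sum_{i=1}^{r}n_i$ and the parts are $V_1,\dots,V_r$ with $|V_i|=n_i$. First I would record the elementary fact that $t$ really is the maximum number of pairwise vertex-disjoint triangles: every triangle meets three distinct parts, so it uses at most one vertex of $V_1$ and at least two vertices outside $V_1$ and at least one vertex outside $V_1\cup V_2$; counting all vertices, the vertices outside $V_1$, and the vertices outside $V_1\cup V_2$ gives the three upper bounds $\lfloor N/3\rfloor$, $\lfloor (N-n_1)/2\rfloor$ and $N-n_1-n_2$, and a greedy packing shows the minimum of the three is attained. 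I would also note that Theorem 4 already yields $ar\ge N-1$, since a coloring with no rainbow $C_3$ and no rainbow $C_4$ in particular has no rainbow $C_4$; thus the whole point is the extra additive term $t$.

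For the lower bound I would build an explicit coloring generalizing Alon's. Fix $t$ vertex-disjoint triangles and let $F$ be a connected spanning subgraph of $G$ whose only cycles are exactly these $t$ triangles; such an $F$ is a ``triangle cactus'' with $|E(F)|=(N-1)+t$ (a spanning tree plus one chord closing each triangle). Color the edges of $F$ with $N+t-1$ distinct colors. Root $F$, write $a_i$ for the entry vertex of the $i$-th triangle and $b_i,c_i$ for its two children, and extend the coloring to the remaining edges of $G$ by giving each non-$F$ edge $uv$ the color of the parent edge of its deeper endpoint, with the single twist that a triangle-child $b_i$ or $c_i$ inherits the color of the entry edge above $a_i$ rather than its own parent edge. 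To check that no rainbow $C_4$ is created I would inspect a deepest vertex $w$ of a putative $4$-cycle: its two cycle edges are forced to receive the same color, and the shared inherited color handles precisely the dangerous case in which the cycle runs through a chord $b_ic_i$. The only point needing care is the tie-breaking when two deepest vertices are adjacent, which is routine.

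For the upper bound I would use the representing-subgraph method. Given a coloring of $G$ with $k$ colors and no rainbow $C_4$, pick one edge of each color to form a rainbow subgraph $R$ with $|E(R)|=k$; since $R$ is rainbow it is automatically $C_4$-free. Among all such $R$ I would choose one that is as close to a forest as possible (for instance minimizing the number of edges lying on cycles of length at least $4$, then the total length of all cycles). The heart of the matter is the structural claim that this optimal $R$ has the property that its only cycles are pairwise vertex-disjoint triangles. Granting this, $R$ is a forest together with $q$ vertex-disjoint triangles, so $|E(R)|=N-c+q$ where $c\ge 1$ is the number of components; because the $q$ triangles are vertex-disjoint triangles of $G$ we have $q\le t$, whence $k=|E(R)|\le N-1+t$, as required.

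The main obstacle is this structural claim, and it is where the completeness of the multipartite host is used. If $R$ contained a cycle $v_1v_2\cdots v_\ell$ with $\ell\ge 5$, then typically $v_1v_4\in E(G)$, and the $4$-cycle $v_1v_2v_3v_4$ is rainbow unless $c(v_1v_4)\in\{c(v_1v_2),c(v_2v_3),c(v_3v_4)\}$; in the latter case a color-preserving swap that replaces the coinciding cycle edge of $R$ by $v_1v_4$ strictly shortens the cycle, contradicting the extremal choice of $R$. A similar argument, using the chords that $G$ supplies between two triangles sharing a vertex, shows $R$ cannot contain two triangles meeting in a vertex; this is exactly the step that upgrades ``triangle cactus'' to ``vertex-disjoint triangles'' and so produces the sharp term $t$ rather than an uncontrolled number of triangle blocks. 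The genuinely multipartite complication, absent in Alon's $K_n$ case, is that a needed chord $v_iv_j$ may fail to exist because $v_i,v_j$ lie in the same part; dealing with these degenerate configurations, and selecting among several candidate chords along a long cycle one joining vertices of different parts, is the delicate part of the argument and is precisely what ties the answer to the triple minimum defining $t$.
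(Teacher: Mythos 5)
Your lower bound is in the right spirit and essentially matches the paper's, though the paper's construction is simpler and easier to verify than your cactus-with-inherited-colors scheme: it colors the $t$ disjoint triangles rainbow, then joins the $i$-th triangle to the union of the earlier ones with one fresh color, then attaches each remaining vertex with one fresh color, giving $3t+(t-1)+l=n+t-1$ colors with no rainbow $C_4$. The real problem is your upper bound: the structural claim at its heart --- that a representing subgraph chosen extremally has only pairwise vertex-disjoint triangles as cycles --- is exactly what you do not prove, and the cases you defer as ``delicate'' are where the argument genuinely breaks. Concretely: on a cycle $v_1v_2\cdots v_6$ of $R$ whose vertices traverse the parts in the pattern $V_aV_bV_cV_aV_bV_c$, \emph{every} distance-three pair $v_i,v_{i+3}$ lies inside one part, so no chord of the form your shortening step needs exists in $G$ at all; the available distance-two chords close $4$-cycles that use \emph{two} non-$R$ edges, so non-rainbowness no longer forces the repeated color onto an $R$-edge of the cycle and no color-preserving swap is dictated. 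Moreover, even in the case where the chord $v_1v_4$ exists, your potential (fewest edges on long cycles, then total cycle length) need not decrease: if $R$ contains another $v_1$--$v_4$ path disjoint from the cycle (a theta-type configuration), removing $v_2v_3$ and adding $v_1v_4$ can create a \emph{longer} cycle through that path, so the ``routine'' termination argument is not routine. The step separating two triangles that share a vertex has the same missing-edge pathology (the needed cross edge may join two vertices of one part). Finally, since $R$ can have overlapping cycles of a complicated cycle space, the reduction from ``cyclomatic number $\le t$'' reasoning to the clean count $|E(R)|=N-c+q$ only applies \emph{after} the structural claim is established, so nothing in the proposal actually bounds $k$.

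For contrast, the paper's upper bound avoids representing-subgraph surgery entirely and this is what makes the multipartite case tractable. It inducts on $n=\sum n_i$: assuming $n+t$ colors and no rainbow $C_4$, it first shows every vertex $x$ has $d^c(x)\ge 2$ starred colors (else delete $x$ and apply induction), then proves three local claims --- two colors starred at $x$ point into different parts; an edge whose color is starred at $x$ is the unique edge of its color; and $d^c(x)=2$ exactly --- so the uniquely-colored edges form a $2$-regular spanning subgraph whose components are cycles. A fourth claim shows that on each such cycle $x_i$ and $x_{i+3}$ must lie in the same part (else $x_ix_{i+1}x_{i+2}x_{i+3}$ is a rainbow $C_4$), forcing every component length to be divisible by $3$ and hence $t=n/3$; deleting any vertex then leaves $n+t-2$ colors on a graph whose anti-Ramsey number is $n+t-3$ by induction, a contradiction. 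Your representing-subgraph route might conceivably be completed, but as written the central claim is unestablished and the deferred multipartite configurations are precisely the hard part of the theorem.
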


Notice that $K_{n}+\overline{K_{s}}$ is a complete $(n+1)$-partite graph $K_{s, 1, \ldots, 1}$. If $2s \ge n \ge 4$, then the number of independent triangles of $K_{n}+\overline{K_{s}}$ is $\left\lfloor\frac{n}{2}\right\rfloor$. We have the following corollary and this answers a question of Gorgol \cite{Gorgol}.

\begin{corollary}

For $2s\ge n\ge 4$, we have $ar(K_{n}+\overline{K_{s}}, C_{4})=\left\lfloor\frac{3n}{2}\right\rfloor+s-1.$

\end{corollary}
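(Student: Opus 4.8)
The plan is to obtain the corollary as an immediate specialization of the preceding theorem, so the whole task reduces to correctly identifying $K_{n}+\overline{K_{s}}$ as a complete multipartite graph and then evaluating the quantity $t$ under the hypothesis $2s\ge n\ge 4$. First I would note that the split graph $K_{n}+\overline{K_{s}}$ is exactly the complete $(n+1)$-partite graph $K_{s,1,\dots,1}$ with $n$ singleton parts: the $n$ vertices of the clique $K_{n}$ are pairwise adjacent, so each occupies its own part, while the $s$ vertices of $\overline{K_{s}}$ form a single independent part. Since $s\ge n/2\ge 1$, the part sizes in nonincreasing order are $n_{1}=s$ and $n_{2}=\dots=n_{n+1}=1$, with $r=n+1$, so that $\sum_{i=1}^{r}n_{i}=s+n$.

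Next I would substitute these sizes into the expression $t=\min\{\lfloor(\sum_{i=1}^{r}n_{i})/3\rfloor,\ \lfloor(\sum_{i=2}^{r}n_{i})/2\rfloor,\ \sum_{i=3}^{r}n_{i}\}$ supplied by the theorem. Because the $n$ singleton parts carry all the small sizes, the three arguments of the minimum become $\lfloor(s+n)/3\rfloor$, then $\lfloor n/2\rfloor$ (from $\sum_{i=2}^{r}n_{i}=n$), and finally $n-1$ (from $\sum_{i=3}^{r}n_{i}=n-1$). The claim is that the middle term governs, i.e. $t=\lfloor n/2\rfloor$. For the first term, the hypothesis $2s\ge n$ gives $s+n\ge 3n/2$, hence $(s+n)/3\ge n/2$, and monotonicity of the floor yields $\lfloor(s+n)/3\rfloor\ge\lfloor n/2\rfloor$; for the third term, $\lfloor n/2\rfloor\le n-1$ holds for every $n\ge 2$. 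Thus $\lfloor n/2\rfloor$ is the minimum, which also confirms the assertion in the text that the maximum number of independent triangles is $\lfloor n/2\rfloor$.

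Finally I would plug $t=\lfloor n/2\rfloor$ into $ar=\sum_{i=1}^{r}n_{i}+t-1=(s+n)+\lfloor n/2\rfloor-1$ and simplify using the identity $n+\lfloor n/2\rfloor=\lfloor 3n/2\rfloor$ (verified separately for $n$ even and $n$ odd) to obtain $ar(K_{n}+\overline{K_{s}},C_{4})=\lfloor 3n/2\rfloor+s-1$. There is no genuinely hard step here: the only point requiring care is the evaluation of the minimum, and in particular recognizing that the role of the hypothesis $2s\ge n$ is precisely to lift the term $\lfloor(s+n)/3\rfloor$ above $\lfloor n/2\rfloor$, so that the bound arising from the singleton parts is the operative one. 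I would also double-check the boundary behavior of the two floor estimates across the parity of $n$ to guard against an off-by-one slip.
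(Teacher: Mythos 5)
Your proposal is correct and matches the paper's own derivation: the paper likewise identifies $K_{n}+\overline{K_{s}}$ with the complete $(n+1)$-partite graph $K_{s,1,\ldots,1}$, notes that under $2s\ge n\ge 4$ the number of independent triangles is $\left\lfloor\frac{n}{2}\right\rfloor$, and specializes Theorem 7. Your explicit verification that the middle term of the minimum governs (via $s+n\ge\frac{3n}{2}$ and monotonicity of the floor) is exactly the computation the paper leaves implicit.
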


The paper is organized as follows. In Section 2, we will give the proof of Theorem 5 and Theorem 6. In Section 3, we will prove Theorem 7. Finally we will give some open problems in Section 4.

{\bf Notations:}  Let $G $ be a simple undirected graph. For $x\in V (G)$, we denote the {\em neighborhood} and the {\em degree} of $x$ in $G$ by $N_G(x)$ and $d_G(x)$, respectively.  The {\em maximum degree}  of $G$ is denoted by  $\Delta(G)$. We will use $G- x$ to denote the graph that arises from $G$ by deleting the vertex $x\in V (G)$. For $\emptyset\not= X \subset V (G)$, $G[X]$ is the subgraph of $G$ induced by $X$ and $G -X$ is the subgraph of $G$ induced by $V (G)\setminus X$. Given a graph $G= (V, E)$, for any (not necessarily disjoint) vertex sets $A, B\subset V$, we let $E_{G}(A, B)=\{uv\in E(G): u\ne v, u\in A, v\in B\}$. A vertex $v$ of a graph $G$ is called a {\em cut vertex} if the components number of $G-x$ is bigger than $G$. A {\em block} of a graph is a subgraph which is connected, has no cut vertex and is maximal with respect to this property.


Given an edge-coloring $c$ of $G$, we denote the color of an edge $uv$ by $c(uv)$.  A color $a$ is  {\em starred} (at $x$) if all the edges with color $a$ induce a star $K_{1,r}$ (centered at the vertex $x$). Note that the two vertices of $K_{1,1}$ can be regarded as the center of  $K_{1,1}$, if a color $a$ is stared at both $x$ and $y$, then $xy$ is the unique edge with color $a$. We let $d^{c}(v)=|\{a\in C(v) : a$ is starred at $ v\}|$. For a subgraph $H$ of $G$, we denote $C(H)=\{c(uv):\, uv\in E(H)\}$.
A {\em representing subgraph} in an edge-coloring of $K_n$ is a spanning subgraph
containing exactly one edge of each color.

~\\

\n{\large\bf {2. 3-cycle}}

~\\

In this section, we will prove Theorem 5 and Theorem 6. We recall the proof  of $ar(K_{n}, C_{3})=n-1$ in \cite{Erdos} because we will use the idea of it later. Let $V(K_{n})=\{v_{1}, v_{2}, \ldots, v_{n}\}.$ For $1\le i<j\le n$, we color edge $v_{i}v_{j}$ with color $i$. In such a way, we use exactly $n-1$ colors and there is no rainbow $C_{3}$. On the other hand, for any $n$-edge-coloring of $K_{n}$, we take a representing subgraph $G$, then $G$ contains a rainbow cycle since $|E(G)|=n$. By adding one chord to the cycle, we get two shorter cycles and at least one of them is rainbow. Do the same operation on the shorter rainbow cycle.  Finally, we will find a rainbow $C_{3}$. To generalize the idea of the proof, we first give the following useful definition and lemma.

\begin{definition}
Let $r\ge 3$ and $G$ be an $r$-partite graph with parts $V_{1}, V_{2},\ldots, V_{r}$, we call a subgraph $H$ of $G$ {\em multipartite}, if there are at least three distinct parts $V_{i}, V_{j}, V_{k}$ such that  $V(H)\cap V_{i}\ne \0, V(H)\cap V_{j}\ne \0$ and $V(H)\cap V_{k}\ne \0$.
\end{definition}

\begin{lemma}
Let $r\ge 3$ and $n_{1}\ge n_{2}\ge \cdots\ge n_{r}\ge 1$.
For an edge-coloring of $K_{n_{1}, n_{2},\ldots, n_{r}}$, if there is a rainbow multipartite cycle, then there is a rainbow $C_{3}$.
\end{lemma}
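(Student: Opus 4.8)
The plan is to induct on the length $\ell$ of the rainbow multipartite cycle $C = v_1 v_2 \cdots v_\ell v_1$, mimicking the chord-splitting idea recalled for $K_n$ but respecting the constraint that a chord $v_a v_b$ is present in $K_{n_1, n_2, \ldots, n_r}$ only when $v_a$ and $v_b$ lie in different parts. The base case $\ell = 3$ is immediate, since a rainbow triangle is exactly a rainbow $C_3$. For $\ell \ge 4$ I would locate a vertex $v_i$ such that $v_{i-1}$ and $v_{i+1}$ lie in distinct parts; then the short chord $f = v_{i-1} v_{i+1}$ exists, and it creates the triangle $v_{i-1} v_i v_{i+1}$ together with the shortened cycle $C' = (C - v_i) + f$ of length $\ell - 1$.

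The reduction then splits into two cases according to the color $c(f)$. If $c(f) \notin \{ c(v_{i-1}v_i), c(v_i v_{i+1}) \}$, then $v_{i-1} v_i v_{i+1}$ is already a rainbow $C_3$ and we are done. Otherwise $c(f)$ coincides with the color of one of the two removed edges; since $C$ is rainbow, that color occurs nowhere else on $C$, so $C'$ is again a rainbow cycle, now of length $\ell - 1 \ge 3$. To close the induction I need $C'$ to remain multipartite, that is, to still meet at least three parts.

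This last requirement is the crux, and it is where the choice of $v_i$ matters: I must pick $v_i$ so that $v_{i-1}, v_{i+1}$ are in different parts \emph{and} deleting $v_i$ does not drop the number of parts met by $C$ below three. I expect this to be the main obstacle, not because it is deep, but because a naive choice (for instance skipping the sole vertex of a part that appears only once) can collapse a three-part cycle to a two-part even cycle, from which no rainbow triangle can be reached by chords at all. I would settle it by a short case analysis on the multiplicities of the parts along $C$. Call $v_i$ a \emph{turn vertex} if $v_{i-1}$ and $v_{i+1}$ lie in different parts; a turn vertex always exists, since otherwise $p_{i-1}$ and $p_{i+1}$ agree for every $i$ and $C$ alternates between just two parts. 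If $C$ meets at least four parts, any turn vertex works, as removing one vertex leaves at least three parts. If $C$ meets exactly three parts, then $\ell \ge 4$ forces some part to repeat, and a turn vertex lying in a repeated part is available: when exactly one part is a singleton, a neighbor of its unique vertex is such a turn vertex (its two cycle-neighbors lie in the two repeated parts), and the cases with two singleton parts force $\ell = 4$ and are checked directly.

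Once such a vertex is in hand, the induction runs cleanly: each step either exhibits a rainbow $C_3$ outright or produces a strictly shorter rainbow multipartite cycle, so the process terminates at a rainbow triangle.
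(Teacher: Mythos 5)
Your proof is correct and follows essentially the same chord-splitting descent as the paper: the paper's one-line proof asserts that every multipartite cycle of length at least $4$ has a chord splitting it into two multipartite cycles (at least one of which stays rainbow), while you instantiate this with the distance-two chord at a turn vertex and, usefully, actually verify the chord-existence claim that the paper leaves implicit. One minor slip worth noting: in the single-singleton case your parenthetical ``its two cycle-neighbors lie in the two repeated parts'' can fail (both cycle-neighbors of the singleton vertex may lie in the same repeated part), but the argument survives because the chosen neighbor is still a turn vertex, its own part is repeated, and the singleton vertex itself remains on the shortened cycle, so all three parts are still met.
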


\begin{proof}
Since for any multipartite cycle $C$ of length at least $4$, there is a chord $e$ of $C$ such that $C\cup e$ contains two multipartite cycles $C^{1}$ and $C^{2}$, where $E(C^{1})\cap E(C^{2})=\{e\}$ and  $E(C^{1})\cup E(C^{2})=E(C)\cup\{e\}$. Thus for any rainbow multipartite cycle of length at least $4$, we can find a shorter rainbow multipartite cycle. Do the same operation on the shorter rainbow multipartite cycle.  Finally, we will find a rainbow $C_{3}$.
\end{proof}

We determine the anti-Ramsey number of $\{C_{3}, C_{4}\}$ in the complete $r$-partite graphs as follows.

\noindent{\bf Theorem 5.}  For $r\ge 3$ and $n_{1}\ge n_{2}\ge \cdots\ge n_{r}\ge 1$, we have
$$ ar(K_{n_{1}, n_{2}, \ldots, n_{r}}, \{C_{3},C_{4}\})=n_{1}+n_{2}+\cdots+n_{r}-1.$$

\begin{proof}
{\bf Lower bound:} Let $n=n_{1}+n_{2}+\cdots+n_{r}$. For all $1\le i\le r$, we take a vertex $u_{i}$ from $V_{i}$.  Let $U=\{u_{1}, u_{2}, \ldots, u_{r}\}$ and $V\setminus U=\{u_{r+1}, u_{r+2}, \ldots, u_{n}\}$. Note that for all $1\le i\le n-1$, there is at least one edge between $u_{i+1}$ and $\{u_{j}: 1\le j\le i\}$ in $K_{n_{1}, n_{2}, \ldots, n_{r}}$. For $1\le i\le n-1$, we color all the edges between $u_{i+1}$ and $\{u_{j}: 1\le j\le i\}$ by color $i$. In such a way, we use exactly $n-1$ colors, and there is no rainbow $C_{3}$ or $C_{4}$.

{\bf Upper bound:} For an $(n_{1}+n_{2}+\cdots+n_{r})$-edge-coloring of $K_{n_{1}, n_{2}, \ldots, n_{r}}$, we take a representing subgraph $G$. Since $|E(G)|=n_{1}+n_{2}+\cdots+n_{r}=|V(G)|$, we can find a cycle $C$ of $G$ and $C$ is rainbow. If $C$ is multipartite, there is a rainbow $C_{3}$ by Lemma  1.  If $C$ is not multipartite, then there exist $i$ and $j$ where $1\le i < j \le r$, such that $V(C)\subset V_{i}\cup V_{j}$ and $|V(C)\cap V_{i}|=|V(C)\cap V_{j}|$. If the length of $C$ is at least $6$, for a chord $e$ of $C$, $C\cup e$ contains two even cycles $C^{1}$ and $C^{2}$, where $E(C^{1})\cap E(C^{2})=\{e\}$ and $E(C^{1})\cup E(C^{2})=E(C)\cup\{e\}$. Since $C$ is rainbow, we claim that at least one of $C^{1}$ and $C^{2}$ is a rainbow even cycle. Thus, we find a rainbow shorter even cycle. Do the same operation on the shorter rainbow even cycle. Finally, we can find a rainbow $C_{4}$.

\end{proof}

Now we will determine the maximum number of edges in an $r$-partite graphs without multipartite cycles by the following lemmas.

\begin{lemma}
For $r\ge 3$ and $n_{1}\ge n_{2}\ge \cdots\ge n_{r}\ge 1$, if $G\subset K_{n_{1}, n_{2}, \ldots, n_{r}}$ and $G$ contains no multipartite $P_{3}$, then
\begin{align}\nonumber
  |E(G)|\le
 \begin{cases}
n_{1}n_{2}+n_{3}n_{4}+\cdots+n_{r-2}n_{r-1}, & r\,\text{ is odd};\\
n_{1}n_{2}+n_{3}n_{4}+\cdots+n_{r-1}n_{r}, & r \,\text{ is even}.
 \end{cases}
 \end{align}
\end{lemma}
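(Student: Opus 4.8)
The plan is to first extract the structural consequence of forbidding a multipartite $P_3$, and then reduce the edge-maximization to a transparent combinatorial optimization over the parts.

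For the structure, I would argue that if some vertex $v\in V_i$ had two neighbors $u,w$ lying in two distinct parts $V_j,V_k$ (necessarily $j,k\ne i$), then $u$-$v$-$w$ would be a multipartite $P_3$. Hence every non-isolated vertex of $G$ sends all of its edges into a single part, so I may assign to each such $v$ the unique part $t(v)$ with $N_G(v)\subseteq V_{t(v)}$. An edge can then join $u\in V_i$ to $v\in V_j$ only when $t(u)=j$ and $t(v)=i$. Writing $S_{ij}=\{v\in V_i:t(v)=j\}$ and $x_{ij}=|S_{ij}|$, all edges between $V_i$ and $V_j$ lie in $S_{ij}\times S_{ji}$, so $|E_G(V_i,V_j)|\le x_{ij}x_{ji}$; and since the sets $S_{ij}\ (j\ne i)$ are disjoint subsets of $V_i$, I obtain the budget constraints $\sum_{j\ne i}x_{ij}\le n_i$. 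This yields
$$|E(G)|\le \sum_{1\le i<j\le r} x_{ij}x_{ji},$$
and it remains to maximize the right-hand side subject to $x_{ij}\ge 0$ and $\sum_{j\ne i}x_{ij}\le n_i$ for every $i$.

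The key step is to reduce this optimization to a matching. The objective is multilinear: fixing every row but the $i$-th, it is a \emph{linear} function of $(x_{ij})_{j\ne i}$ over the simplex $\{x\ge 0,\ \sum_{j\ne i}x_{ij}\le n_i\}$, hence maximized at a vertex, that is, with all of part $i$'s budget placed on a single target (or with the whole row zero). Sweeping once through the rows, starting from an optimal solution and replacing each row by such a vertex without decreasing the objective, I may assume each part commits entirely to at most one other part, say $x_{i\tau(i)}=n_i$ with the rest of row $i$ vanishing. Then $x_{ij}x_{ji}\ne 0$ forces $\tau(i)=j$ and $\tau(j)=i$, so the contributing pairs are mutual and form a matching $M$ on the parts with value $\sum_{\{i,j\}\in M}n_in_j$. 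Thus the optimum equals $\max_M\sum_{\{i,j\}\in M}n_in_j$. Finally I would show this is maximized by pairing the parts consecutively in decreasing size, using the elementary inequality $ab+cd\ge ac+bd\ge ad+bc$ for $a\ge b\ge c\ge d\ge0$: any two matched pairs can be re-paired ``largest-with-largest'' without loss, and a bubble-sort/exchange argument gives $\{1,2\},\{3,4\},\dots$ as optimal. To handle odd $r$ uniformly I would append a dummy part of size $0$; the part forced to pair with it contributes nothing, which is exactly the effect of leaving $V_r$ unmatched. This produces $n_1n_2+\cdots+n_{r-2}n_{r-1}$ for odd $r$ and $n_1n_2+\cdots+n_{r-1}n_r$ for even $r$, matching the claimed bound (and attained by the obvious vertex-disjoint complete-bipartite construction).

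The main obstacle is the reduction in the middle paragraph: the objective $\sum_{i<j}x_{ij}x_{ji}$ is not concave, so I cannot invoke convex optimization directly, and the delicate point is justifying rigorously that an optimal solution may be taken to be fully committed and mutual, i.e.\ a matching. Once that reduction is secured, the concluding rearrangement argument is routine.
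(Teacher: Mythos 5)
Your proposal is correct, and it takes a genuinely different route from the paper. You and the paper share the same opening observation (no multipartite $P_3$ means every vertex sends all its edges into a single part), but from there the paper proceeds by induction on $r$ with a compression argument: it picks a maximum-degree vertex $v_1\in V_1$ with $N_G(v_1)\subseteq V_{i_0}$, shifts edges so that $G[V_1\cup V_{i_0}]$ (and similarly $G[V_2\cup V_{j_0}]$) becomes complete bipartite without losing edges or creating a multipartite $P_3$, then applies the swap inequality $n_1n_{i_0}+n_2n_{j_0}\le n_1n_2+n_{i_0}n_{j_0}$ (i.e.\ $(n_1-n_{j_0})(n_2-n_{i_0})\ge 0$) to re-pair $V_1$ with $V_2$, deletes $V_1\cup V_2$, and invokes the induction hypothesis. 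You instead make the global structure explicit: the bound $|E(G)|\le\sum_{i<j}x_{ij}x_{ji}$ with row budgets $\sum_{j\ne i}x_{ij}\le n_i$, reduced via block-coordinate linearity to a maximum-weight matching on the parts, then sorted by the exchange inequality $ab+cd\ge ac+bd$. Your worry about non-concavity is in fact fully resolved by your own argument: the objective is affine in each row separately, so a single sweep replacing each row of an optimal solution by an extreme point of its simplex ($0$ or $n_ie_{\tau(i)}$) is legitimate, never revisits a row, and forces the support onto mutual pairs, i.e.\ a matching; relaxing integrality is harmless since you only need an upper bound. Note that your exchange step and the paper's swap inequality are the same inequality in different clothing. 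What each approach buys: the paper's induction is compact and pairs off $V_1,V_2$ immediately, but leaves the extremal structure implicit; your argument isolates it cleanly (disjoint complete bipartite graphs along a maximum-weight matching of the parts, with the smallest part left out when $r$ is odd) and avoids induction on $r$ entirely, at the cost of setting up the optimization framework.
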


\begin{proof} We will prove it by induction on $r$. It is obvious to see that the conclusion holds for the base cases $r=1$ and $r=2$. Assume that it holds for all integers less than $r$, and let $G\subset K_{n_{1}, n_{2},\ldots,n_{r}}$ which contains no multipartite $P_{3}$.  For any vertex $v$, there is some part $V_{i}$ such that $N_{G}(v)\subset V_{i}$. Choose $v_{1}\in V_{1}$ such that $d_{G}(v_{1})=\max\{d_{G}(v): v\in V_{1}\}$. Suppose $N_{G}(v_{1})\subset V_{i_{0}}$, where $2\le i_{0}\le r$.  For every vertex $v\in V_{1} \setminus \{v_{1}\}$, we delete all the edges incident with $v$ in $G$ and connect all the edges between $v$ and $N_{G}(v_{1})$. For every vertex $u\in V_{i_{0}}\setminus N_{G}(v_{1})$, we delete all the edges incident with $u$ in $G$ and connect all the edges between $u$ and $V_{1}$. Denote the graph we obtained by $G_{1}$, we have $|E(G)|\le |E(G_{1})|$ and $G_{1}$ contains no multipartite $P_{3}$.

 If $i_{0}\ne 2$, we choose $v_{2}\in V_{2}$ such that $d_{G_{1}}(v_{2})=\max\{d_{G_{1}}(v):v\in V_{2}\}$. Suppose $N_{G_{1}}(v_{2})\subset V_{j_{0}}$, where $3\le j_{0}\le r$ and $j_{0}\ne i_{0}$.  For every vertex $v\in V_{2}\setminus \{v_{2}\}$, we delete all the edges incident with $v$ in $G_{1}$ and connect all the edges between $v$ and $N_{G_{1}}(v_{2})$. For every vertex $u\in V_{j_{0}}\setminus N_{G_{1}}(v_{2})$, we delete all the edges incident with $u$ in $G_{1}$ and connect all the edges between $u$ and $V_{2}$.
Denote the graph we obtained by $G_{2}$, we have $|E(G_{1})|\le |E(G_{2})|$ and $G_{2}$ contains no multipartite $P_{3}$. We note that both $G_{2}[V_{1}\cup V_{i_{0}}]$ and $G_{2}[V_{2}\cup V_{j_{0}}]$ are complete bipartite graphs. We delete all the edges of $G_{2}[V_{1}\cup V_{i_{0}}]$ and $G_{2}[V_{2}\cup V_{j_{0}}]$, connect all the edges between $V_{1}$ and $V_{2}$ and connect all the edges between $V_{i_{0}}$ and $V_{j_{0}}$. Denote the graph we obtained by $G'$, then $G'$  contains no multipartite $P_{3}$. Since $n_{1}n_{i_{0}}+n_{2}n_{j_{0}}\le n_{1}n_{2}+n_{i_{0}}n_{j_{0}}$, we have $|E(G_{2})|\le |E(G')|$. If $i_{0}= 2$, we denote $G'=G_{1}$.

By the induction hypothesis, we have
 \[\begin{split} \nonumber
 |E(G)|\le |E(G')|& =n_{1}n_{2}+ |E(G'-(V_{1}\cup V_{2}))|\\
 &\le n_{1}n_{2}+
 \begin{cases}
n_{3}n_{4}+\cdots+n_{r-2}n_{r-1}, & r\,\text{ is odd};\\
n_{3}n_{4}+\cdots+n_{r-1}n_{r}, & r \,\text{ is even}.
 \end{cases}\\
 &= \begin{cases}
n_{1}n_{2}+n_{3}n_{4}+\cdots+n_{r-2}n_{r-1}, & r\,\text{ is odd};\\
n_{1}n_{2}+n_{3}n_{4}+\cdots+n_{r-1}n_{r}, & r \,\text{ is even}.
 \end{cases}
\end{split}\]

\end{proof}

\begin{lemma}

Let $G\subset K_{n_{1}, n_{2}, \ldots, n_{r}}$ which contains no multipartite cycle. If $P=xyz$ is a multipartite $P_{3}$ of $G$, then $y$ is a cut vertex of $G$.

\end{lemma}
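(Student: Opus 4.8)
The plan is to argue by contradiction, using the hypothesis that $G$ has no multipartite cycle together with the characterization of a cut vertex as one whose deletion strictly increases the number of components. First I would record the elementary observation that a multipartite $P_3$ on vertices $x,y,z$ (with edges $xy$ and $yz$) must have its three vertices in three distinct parts: $x$ and $y$ lie in different parts since $xy\in E(G)$, likewise $y$ and $z$, and the multipartite condition forces $x$ and $z$ into parts different from each other and from the part of $y$. Write $x\in V_i$, $y\in V_j$, $z\in V_k$ with $i,j,k$ pairwise distinct.

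Now suppose, for contradiction, that $y$ is \emph{not} a cut vertex of $G$. Let $K$ be the component of $G$ containing the path $xyz$. Since $y$ is not a cut vertex, deleting $y$ does not split $K$, so in particular $x$ and $z$ still lie in a common component of $G-y$. Hence there is a path $Q$ from $x$ to $z$ in $G-y$.

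The key step is to combine $Q$ with the two edges $xy$ and $yz$ into a cycle $C$ through $y$: traverse $y\to x$, then along $Q$ from $x$ to $z$, then $z\to y$. Because $Q$ avoids $y$ and its vertices are distinct, $C$ is a genuine (simple) cycle. Since $C$ contains $x\in V_i$, $y\in V_j$ and $z\in V_k$ with $i,j,k$ distinct, $C$ meets at least three parts and is therefore multipartite, contradicting the assumption that $G$ contains no multipartite cycle. Thus $y$ must be a cut vertex.

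I expect no serious obstacle here; the argument is short. The only points requiring a little care are (i) verifying that the closed walk $y,x,Q,z$ is in fact a simple cycle, which holds since $Q$ is internally disjoint from $\{x,y,z\}$ and avoids $y$, and (ii) confirming the cycle is multipartite, which is immediate from $x,y,z$ lying in three distinct parts. An equivalent and perhaps cleaner phrasing sidesteps naming $K$: show directly that $x$ and $z$ must lie in \emph{different} components of $G-y$ (otherwise a path $Q$ between them yields the forbidden multipartite cycle), and since $x,y,z$ all lie in a single component of $G$, deleting $y$ strictly increases the number of components, so $y$ is a cut vertex.
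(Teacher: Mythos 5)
Your proposal is correct and follows essentially the same route as the paper: assume $y$ is not a cut vertex, obtain a path $Q$ from $x$ to $z$ in $G-y$, and close it up with the edges $xy$ and $yz$ to form a cycle through three distinct parts, contradicting the absence of multipartite cycles. Your extra care in checking that $x,y,z$ lie in three distinct parts and that the resulting closed walk is a simple cycle only makes explicit what the paper's one-line proof leaves implicit.
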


\begin{proof}
 Suppose $y$ is not a cut vertex of $G$. Then there is a path $P'$ connect $x$ and $z$ in $G-y$, thus $xPzP'x$ is a multipartite cycle of $G$, a contradiction.
\end{proof}

\begin{lemma}

For $r\ge 3$ and $n_{1}\ge n_{2}\ge \cdots\ge n_{r}\ge 1$, if $G\subset K_{n_{1}, n_{2}, \ldots, n_{r}}$ and $G$ contains no multipartite cycle, then
\begin{align}\nonumber
|E(G)|\le
 \begin{cases}
 n_{1}n_{2}+n_{3}n_{4}+ \cdots+n_{r-2}n_{r-1}+n_{r}+\frac{r-1}{2}-1,& r\, \text{ is odd};\\
n_{1}n_{2}+n_{3}n_{4}+ \cdots+n_{r-1}n_{r}+\frac{r}{2}-1,& r \,\text{ is even}.
 \end{cases}
 \end{align}
\end{lemma}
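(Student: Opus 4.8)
The plan is to bound $|E(G)|$ through the block (maximal $2$-connected subgraph) decomposition of $G$, exploiting the fact that forbidding multipartite cycles forces every block to be extremely simple.

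First I would reduce to the case that $G$ is connected and has no isolated vertices. Isolated vertices are irrelevant to $|E(G)|$, and if $G$ has several nontrivial components then, since each component meets at least two parts, one may join them successively by bridges; a bridge lies on no cycle, so this creates no multipartite cycle and only increases $|E(G)|$. Hence it suffices to treat connected $G$. Now I analyse its blocks. A block $B$ that is $2$-connected (so $|V(B)|\ge 3$) cannot contain a multipartite $P_3$: if $xyz$ were one, then $B-y$ is still connected and contains $x,z$, so an $x$--$z$ path in $B-y$ together with $xyz$ is a multipartite cycle, contradicting the hypothesis (this is the mechanism of Lemma 3). A connected graph in which no vertex has neighbours in two different parts lies inside the union of two parts, so each $2$-connected block $B$ is bipartite between two parts, with $a_B\ge 2$ and $b_B\ge 2$ vertices in them and $|E(B)|\le a_Bb_B$; every remaining block is a single edge.

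The key identity is the cyclomatic decomposition over blocks: for connected $G$,
\begin{equation*}
|E(G)|=|V(G)|-1+\sum_{B}\bigl(|E(B)|-|V(B)|+1\bigr),
\end{equation*}
the sum running over all blocks. A bridge contributes $0$, while a $2$-connected bipartite block contributes $|E(B)|-|V(B)|+1\le a_Bb_B-(a_B+b_B)+1=(a_B-1)(b_B-1)$. Writing $N_i=n_i-1$ and $M=N_1N_2+N_3N_4+\cdots$ for the pairing of consecutive terms of the sorted sequence $N_1\ge\cdots\ge N_r$, a short computation shows that the asserted bound equals exactly $(n-1)+M$ with $n=\sum_i n_i$, in both parities. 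Since $|V(G)|\le n$, the whole statement reduces to the single inequality $\sum_{B}(a_B-1)(b_B-1)\le M$, the sum being over the $2$-connected blocks.

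To prove this I would first extract a capacity constraint from the block--cut tree. Fix a part $V_j$, let $B_1,\dots,B_t$ be the $2$-connected blocks meeting $V_j$, and put $S_\ell=V(B_\ell)\cap V_j$. Two blocks share at most one vertex and the block--cut tree is a tree, so the family $\{S_\ell\}$ has no Berge cycle (it is a \emph{hyperforest} inside $V_j$); hence $\sum_\ell(|S_\ell|-1)\le n_j-1$. Viewing each $2$-connected block as an edge of an auxiliary multigraph on the parts that carries load $a_B-1$ at one endpoint and $b_B-1$ at the other, this says the total load at part $j$ is at most $N_j$. I am thereby reduced to a purely numerical optimisation: over all assignments of nonnegative loads to the half-edges of a multigraph on $\{1,\dots,r\}$ whose total load at each vertex $j$ is at most $N_j$, maximise $\sum_{e}\alpha_e\beta_e$ (the product of the two loads of edge $e$), and show the maximum is $M$. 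This optimisation is the main obstacle, and I would settle it by an exchange argument. Merging parallel edges only increases the objective while preserving all loads, so one may assume a simple graph on a compact feasible region, where a maximiser exists. If some part $j$ carries positive load on two edges, then shifting load at $j$ from the edge with the smaller opposite load to the one with the larger opposite load changes the objective by a nonnegative multiple of the difference of the opposite loads, and may be continued until one edge loses all its load at $j$ and is deleted; this never decreases the objective and strictly reduces the number of edges. Iterating, a maximiser that is a matching is reached, and for a matching $\sum_e\alpha_e\beta_e\le\sum_{\{j,k\}}N_jN_k\le M$, since the sorted pairing maximises the matching weight of a nonnegative sequence. Combining this with $|E(G)|\le|V(G)|-1+\sum_{B}(a_B-1)(b_B-1)$ and $|V(G)|\le n$ yields $|E(G)|\le(n-1)+M$, which is the claimed bound.
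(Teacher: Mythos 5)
Your proposal is correct, but it takes a genuinely different route from the paper's proof. The paper fixes an edge-maximal $G$, proves that $G$ is connected with every block a complete bipartite graph between two parts (its Claim 1), then picks a transversal $V_0=\{x_1,\ldots,x_r\}$, one vertex per part, and repeatedly reroutes edges at cut vertices of $\cup_i V_i'$ onto the representative $x_i$ until no vertex outside $V_0$ is a cut vertex (its Claim 2, asserted there without a written proof); by its Lemma 3 the resulting $G[\cup_i V_i']$ has no multipartite $P_3$, so its Lemma 2 (a separate induction on $r$ with an edge-shifting argument) bounds $|E(G[\cup_i V_i'])|$ by $(n_1-1)(n_2-1)+(n_3-1)(n_4-1)+\cdots$, and the count is completed by $|E_G(x,V_0)|\le 1$ for $x\notin V_0$ and $|E(G[V_0])|\le r-1$. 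You instead leave $G$ untouched and work on the block--cut tree directly: the cyclomatic identity $|E(G)|=|V(G)|-1+\sum_B(|E(B)|-|V(B)|+1)$, the per-block bound $|E(B)|-|V(B)|+1\le(a_B-1)(b_B-1)$, the hyperforest capacity constraint $\sum_\ell(|S_\ell|-1)\le n_j-1$ at each part, and a load-shifting plus rearrangement optimisation showing the total excess is at most $M=\sum_i(n_{2i-1}-1)(n_{2i}-1)$; since the stated bound equals $(n-1)+M$ in both parities (I checked this identity, and it holds), this closes the argument. I also verified your two most delicate steps: the Berge-acyclicity of the family $\{S_\ell\}$ does follow because a Berge cycle of length $2$ would give two blocks sharing two vertices, and a longer one would give a non-backtracking closed walk in the block--cut tree; and your exchange argument (merge parallel edges, shift load at a part toward the edge with larger opposite load, delete emptied edges, then bound the resulting matching by the sorted consecutive pairing via $(N_1-N_y)(N_2-N_x)\ge 0$) is sound. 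What each approach buys: yours avoids both the paper's rerouting operations and the entire induction of Lemma 2, replacing them by a transparent numerical optimisation, at the cost of invoking the slightly heavier machinery of hyperforests and continuous exchange; the paper's route is more elementary and purely graph-theoretic, but hinges on the rerouting claim whose verification it leaves to the reader.
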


\begin{proof}

Take a graph $G\subset K_{n_{1}, n_{2},\ldots,n_{r}}$ such that $G$ contains no multipartite cycle and $|E(G)|$ is the maximum possible.

{\bf Claim 1.} $G$ is connected and each block of $G$ is a complete bipartite graph whose two parts belong to two parts of $K_{n_{1}, n_{2}, \ldots, n_{r}}$, respectively.

{\bf Proof of  Claim 1.} If $G$ is not connected, we can add edges between the connected components of $G$, such that there is still no multipartite cycle, a contradiction. Let $B$ be a block of $G$, if there are three distinct parts $V_{i}, V_{j}$ and $V_{k}$ such that $V(B)\cap V_{i}\ne \0, V(B)\cap V_{j}\ne \0$ and $V(B)\cap V_{k}\ne \0$, then $B$ contains a multipartite cycle, a contradiction. \q

For $1\le i\le r$, we select one vertex $x_{i}\in V_{i}$ and let $V'_{i}=V_{i}\setminus \{x_{i}\}$. Denote $V_{0}=\{x_{1}, x_{2},\ldots, x_{r}\}.$ For each $i=1,2,\ldots, r$, perform  the following operations sequentially:

If there is a vertex $v\in V'_{i}$ such that $v$ is a cut vertex of $G$, and $v$ separate the blocks $B_{1}$ and $B_{2}$, then there is at least one of $B_{1}$ and $B_{2}$, we say $B_{1}$ such that there is no paths connect $x_{i}$ to $N_{G}(v)\cap V(B_{1})$ in $G-v$. Thus, we delete the edges between $v$ and $N_{G}(v)\cap V(B_{1})$, and connect the edges between $x_{i}$ and $N_{G}(v)\cap V(B_{1})$. We still denote the new graph by $G$.

{\bf Claim 2.} In each step, $G$ contains no multipartite cycle and  $|E(G)|$ is the same as in the starting graph. When the above procedure stops,  $v$ is not a cut vertex of $G$  for any vertex $v\in \cup_{i=1}^{r}V'_{i}$.

By  Claim 2 and  Lemma 3, there is no multipartite $P_{3}$ in $G[\cup_{i=1}^{r}V'_{i}]$. Therefore, by lemma 2, we have
 \begin{align}\nonumber
  |E(G[\cup_{i=1}^{r}V'_{i}])|\le
 \begin{cases}
 (n_{1}-1)(n_{2}-1)+\cdots+(n_{r-2}-1)(n_{r-1}-1),& r\,\text{ is odd};\\
 (n_{1}-1)(n_{2}-1)+\cdots+(n_{r-1}-1)(n_{r}-1),& r \,\text{ is even}.
 \end{cases}
 \end{align}
Since for all $x\in \cup_{i=1}^{r}V'_{i}$, $x$ is not a cut vertex of $G$, we have $|E_{G}(x, V_{0})|\le 1.$  Since $G[V_{0}]$ contains no cycles, we have $|E(G[V_{0}])|\le r-1$.
Thus, we have
\[\begin{split}|E(G)|&=|E(G[\cup_{i=1}^{r}V'_{i}])|+|E_{G}(\cup_{i=1}^{r}V'_{i},V_{0})|+|E(G[V_{0}])|\\
&\le \begin{cases} \begin{split} &(n_{1}-1)(n_{2}-1)+(n_{3}-1)(n_{4}-1)\cdots+(n_{r-2}-1)(n_{r-1}-1)+\\
&\,  (n_{1}-1)+(n_{2}-1)+\cdots+(n_{r}-1)+r-1,\end{split} & r\,\text{ is odd};\\
\begin{split} &(n_{1}-1)(n_{2}-1)+(n_{3}-1)(n_{4}-1)\cdots+(n_{r-1}-1)(n_{r}-1)+\\
&\,   (n_{1}-1)+(n_{2}-1)+\cdots+(n_{r}-1)+r-1,\end{split}& r \,\text{ is even}.
 \end{cases}\\
 &=  \begin{cases}
n_{1}n_{2}+n_{3}n_{4}+\cdots+n_{r-2}n_{r-1}+n_{r}+\frac{r-1}{2}-1 ,& r\,\text{ is odd};\\
n_{1}n_{2}+n_{3}n_{4}+\cdots+n_{r-1}n_{r}+\frac{r}{2}-1,& r \,\text{ is even}.
 \end{cases}
 \end{split}\]

\end{proof}

Now, we will prove Theorem 6.

\noindent{\bf Theorem 6.} For $r\ge 3$ and $n_{1}\ge n_{2}\ge \cdots\ge n_{r}\ge 1$, we have
 \begin{align}\nonumber
  ar(K_{n_{1}, n_{2}, \ldots, n_{r}}, C_{3})=
 \begin{cases}
 n_{1}n_{2}+n_{3}n_{4}+ \cdots+n_{r-2}n_{r-1}+n_{r}+\frac{r-1}{2}-1,& r\, \text{ is odd};\\
n_{1}n_{2}+n_{3}n_{4}+ \cdots+n_{r-1}n_{r}+\frac{r}{2}-1,& r \,\text{ is even}.
 \end{cases}
 \end{align}

 \begin{proof}
{\bf Lower bound:} Let $K=K_{n_{1}, n_{2}, \ldots, n_{r}}$ and $V_{1}, V_{2}, \ldots, V_{r}$ be the  partition of $V(K)$. For $r$ is odd,  first, we color $K[V_{1}\cup V_{2}], K[V_{3}\cup V_{4}], \ldots, K[V_{r-2}\cup V_{r-1}]$ rainbow. Second, for every vertex  $v\in V_{r}$, we color the edges incident $v$ with one new distinct color. Finally, for $1\le i\le \frac{r-1}{2}-1$, we color the edges between $V_{2i-1}\cup V_{2i}$ and $\cup _{j=2i+1}^{r-1}V_{j}$ with one new distinct color. In such way, we use exactly $n_{1}n_{2}+n_{3}n_{4}+ \cdots+n_{r-2}n_{r-1}+n_{r}+\frac{r-1}{2}-1$ colors, and there is no rainbow $C_{3}$.

For $r$ is even, first, we color $K[V_{1}\cup V_{2}], K[V_{3}\cup V_{4}], \ldots, K[V_{r-1}\cup V_{r}]$ rainbow. Second, for $1\le i\le \frac{r}{2}-1$, we color the edges between $V_{2i-1}\cup V_{2i}$ and $\cup _{j=2i+1}^{r}V_{j}$ with one new distinct color. In such way, we use exactly $n_{1}n_{2}+n_{3}n_{4}+ \cdots+n_{r-1}n_{r}+\frac{r}{2}-1$ colors, and there is no rainbow $C_{3}$.

{\bf Upper bound:} For $r\ge 3$ and $n_{1}\ge n_{2}\ge \cdots\ge n_{r}\ge 1$, we denote
 \begin{align}\nonumber
  f(n_{1}, n_{2},\ldots, n_{r})=
 \begin{cases}
 n_{1}n_{2}+n_{3}n_{4}+ \cdots+n_{r-2}n_{r-1}+n_{r}+\frac{r-1}{2}-1,& r\, \text{ is odd};\\
n_{1}n_{2}+n_{3}n_{4}+ \cdots+n_{r-1}n_{r}+\frac{r}{2}-1,& r \,\text{ is even}.
 \end{cases}
 \end{align}
Given any edge-coloring of $K_{n_{1}, n_{2},\ldots,n_{r}}$ with $f(n_{1}, n_{2},\ldots, n_{r})+1$ colors, we take a  representing subgraph $G$. Notice that $G$ is rainbow and $|E(G)|=f(n_{1}, n_{2},\ldots, n_{r})+1$. By Lemma 4, $G$ contains a rainbow multipartite cycle. Hence, there is a rainbow $C_{3}$ by Lemma 1.
\end{proof}

~\\

\n{\large\bf{ 3. 4-cycle}}

~\\

In this section, we study the anti-Ramsey number of $C_{4}$ in the complete $r$-partite graphs. Before doing so, we will determine the maximum number of independent triangles in the complete $r$-partite graphs by the following proposition.

\begin{proposition}

For $r\ge 3$ and $n_{1}\ge n_{2}\ge \cdots\ge n_{r}\ge 1$, let $t$ be  the maximum number of independent triangles of $K_{n_{1}, n_{2}, \ldots, n_{r}}$, then we have
$$t=\min\left\{\left\lfloor\frac{\sum_{i=1}^{r}n_{i}}{3}\right\rfloor, \left\lfloor\frac{\sum_{i=2}^{r}n_{i}}{2}\right\rfloor, \sum_{i=3}^{r}n_{i}\right\}.$$

\end{proposition}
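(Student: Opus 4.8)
The plan is to prove both inequalities, writing $M=\min\left\{\left\lfloor\frac{\sum_{i=1}^{r}n_{i}}{3}\right\rfloor, \left\lfloor\frac{\sum_{i=2}^{r}n_{i}}{2}\right\rfloor, \sum_{i=3}^{r}n_{i}\right\}$ for the claimed value and showing that the maximum number $t$ of independent triangles satisfies $t\le M$ and $t\ge M$. For the upper bound I would use three elementary counting observations, all resting on the fact that every triangle of $K_{n_1,\ldots,n_r}$ has its three vertices in three pairwise distinct parts. First, $t$ vertex-disjoint triangles occupy $3t$ vertices, so $3t\le\sum_i n_i$ and hence $t\le\lfloor(\sum_i n_i)/3\rfloor$. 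Second, each triangle meets $V_1$ in at most one vertex, so it has at least two vertices in $\bigcup_{i\ge 2}V_i$; summing over the $t$ triangles gives $2t\le\sum_{i\ge 2}n_i$, i.e. $t\le\lfloor(\sum_{i\ge 2}n_i)/2\rfloor$. Third, each triangle meets each of $V_1$ and $V_2$ in at most one vertex, hence has at least one vertex in $\bigcup_{i\ge 3}V_i$, giving $t\le\sum_{i\ge 3}n_i$. Taking the minimum yields $t\le M$.

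For the lower bound I want to exhibit $M$ pairwise disjoint triangles. The key reduction is to the single inequality $\sum_{i=1}^{r}\min(n_i,M)\ge 3M$. I would establish it by a short case analysis on how many parts have size at least $M$. If the three largest parts all have size $\ge M$, then the top three alone already contribute $M+M+M=3M$. Otherwise the parts of size $<M$ contribute their full sizes, and I close the estimate using exactly one of the three defining bounds: if only the two largest parts reach $M$ I use $M\le\sum_{i\ge 3}n_i$; if only the largest does I use $2M\le\sum_{i\ge 2}n_i$; and if no part reaches $M$ I use $3M\le\sum_i n_i$. In each regime the relevant term of the minimum is precisely what is needed.

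Granting $\sum_i\min(n_i,M)\ge 3M$ with every summand at most $M$, the construction is explicit. From each part $V_i$ I select a set of $m_i\le\min(n_i,M)$ vertices so that $\sum_i m_i=3M$ exactly, which is possible because the truncated total is at least $3M$ and can be trimmed one vertex at a time. I then list the $3M$ chosen vertices grouped by part as $w_0,w_1,\ldots,w_{3M-1}$ and assign $w_p$ to triangle number $p\bmod M$. Triangle $j$ thus receives the vertices in positions $j,\,j+M,\,j+2M$, which are pairwise at distance at least $M$. Since the vertices of any one part occupy a block of consecutive positions of length $m_i\le M$, no such block can contain two positions that differ by at least $M$; hence the three vertices of each triangle lie in three distinct parts and form a genuine triangle, and the $M$ triangles are vertex-disjoint by construction.

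The main obstacle is the inequality $\sum_i\min(n_i,M)\ge 3M$ together with the correctness of the residue assignment. The former requires care to invoke the right one of the three bounds in each regime of part sizes, and the latter rests on the simple but essential observation that two positions differing by at least $M$ cannot lie in a common block of length at most $M$. Once these two points are in place, the upper and lower bounds coincide and give $t=M$.
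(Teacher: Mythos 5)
Your proof is correct, and while your upper bound coincides with the paper's (the same three counting observations: each triangle uses three vertices, at most one from $V_1$, and at most one from each of $V_1$ and $V_2$), your lower bound takes a genuinely different route. The paper argues indirectly from maximality: it fixes a maximum family of $t$ disjoint triangles, considers the set $U$ of uncovered vertices, observes that $U$ must lie inside the union of at most two parts $V_k\cup V_l$ (otherwise a further triangle could be added), and then by a case analysis on $|U\cap V_l|\in\{0,1,\ge 2\}$ deduces that whenever $t<\left\lfloor\frac{\sum_i n_i}{3}\right\rfloor$, the family already attains one of the other two bounds, so $t\ge M$ in every case. You instead prove $t\ge M$ constructively: you first establish the truncation inequality $\sum_i\min(n_i,M)\ge 3M$ (your regime analysis is right, and in each regime you invoke exactly the needed member of the minimum — $M\le\sum_{i\ge 3}n_i$ when two parts reach $M$, $2M\le\sum_{i\ge 2}n_i$ when one does, $3M\le\sum_i n_i$ when none does), then select $3M$ vertices in blocks of size $m_i\le M$ per part and assign position $p$ to triangle $p\bmod M$; the key point that a block of length at most $M$ cannot contain two positions differing by at least $M$ correctly forces the three vertices of each triangle into three distinct parts. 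The two case analyses are of comparable weight, so neither route is shorter; what your version buys is an explicit family of $M$ disjoint triangles — precisely the object the paper needs, but takes for granted, in the lower-bound coloring of Theorem 7 — together with the cleanly reusable inequality $\sum_i\min(n_i,M)\ge 3M$, whereas the paper's maximality argument is slicker in that it never selects anything explicitly, but yields existence only.
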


\begin{proof}
Let $T_{1}, T_{2}, \ldots, T_{t}$ be the $t$ independent triangles of $K_{n_{1}, n_{2}, \ldots, n_{r}}$. Since each triangle contains exactly $3$ vertices of $V$,  at least two vertices of $V_{2}\cup V_{3}\cup \cdots \cup V_{r}$ and  at least one vertices of $V_{3}\cup V_{4}\cup \cdots \cup V_{r}$, we have $$t\le\min\left\{\left\lfloor\frac{\sum_{i=1}^{r}n_{i}}{3}\right\rfloor, \left\lfloor\frac{\sum_{i=2}^{r}n_{i}}{2}\right\rfloor, \sum_{i=3}^{r}n_{i}\right\}.$$

On the other hand, let $U=V\setminus (V(T_{1})\cup V(T_{2})\cup \cdots \cup V(T_{t}))$. Suppose that $t\ne\left\lfloor\frac{\sum_{i=1}^{r}n_{i}}{3}\right\rfloor$, then we have $|U|=\sum_{i=1}^{r}n_{i}-3t\ge 3.$ Since $t$ is  the maximum number of independent triangles of $K_{n_{1}, n_{2}, \ldots, n_{r}}$, there are two parts $V_{k}$ and $V_{l}$ such that $U\subset V_{k}\cup V_{l}$. Assume that $|U\cap V_{k}|\ge |U\cap V_{l}|\ge 0.$

If $|U\cap V_{l}|=0$, then $|U\cap V_{k}|= |U|-|U\cap V_{l}|\ge 3$. Since each triangle contains at least two vertices of $V\setminus V_{k}$, we have $t\ge \left\lfloor\frac{|V|-|V_{k}|}{2}\right\rfloor \ge \left\lfloor\frac{|V|-|V_{1}|}{2}\right\rfloor =\left\lfloor\frac{\sum^{r}_{i=2}n_{i}}{2}\right\rfloor$.

If $|U\cap V_{l}|= 1$, then $|U\cap V_{k}|= |U|-|U\cap V_{l}|\ge 2$. We claim that each triangle contains one vertex of $V_{k}$ and two vertices of $V\setminus V_{k}$. Otherwise, there is a triangle $T_{i}$ such that $T_{i}\cap V_{k}=\0$, and we can get two independent triangles from $V(T_{i})\cup U$, a contradiction. Thus,
$t=\frac{|V|-|V_{k}|-1}{2}$ and $\frac{|V|-|V_{k}|-1}{2}$ is an integer. We have
 $$t=\frac{|V|-|V_{k}|-1}{2}=\left\lfloor\frac{|V|-|V_{k}|}{2}\right\rfloor\ge \left\lfloor\frac{|V|-|V_{1}|}{2}\right\rfloor=\left\lfloor\frac{\sum_{i=2}^{r}n_{i}}{2}\right\rfloor.$$

If $|U\cap V_{l}|\ge 2$, we claim that each triangle contains one vertex of $V_{k}$, one vertex of $V_{l}$ and one vertex of $V\setminus(V_{k}\cup V_{l})$. Otherwise, there is a triangle $T_{i}$ such that  $|V(T_{i})\cap (V\setminus(V_{k}\cup V_{l}))|\ge 2$,  and we can get two independent triangles from $V(T_{i})\cup U$, a contradiction. Thus, $t=|V|-|V_{k}\cup V_{l}|\ge |V|-|V_{1}\cup V_{2}|=\sum_{i=3}^{r}n_{i}.$

\end{proof}


Now, we will prove Theorem 7.

\noindent{\bf Theorem 7.} For $r\ge 3$ and $n_{1}\ge n_{2}\ge\cdots\ge n_{r}\ge 1$, we have
$$ar(K_{n_{1}, n_{2}, \ldots, n_{r}}, C_{4})= n_{1}+n_{2}+\cdots+n_{r}+t-1,$$
where $t=\min\left\{\left\lfloor\frac{\sum_{i=1}^{r}n_{i}}{3}\right\rfloor, \left\lfloor\frac{\sum_{i=2}^{r}n_{i}}{2}\right\rfloor, \sum_{i=3}^{r}n_{i}\right\}$ is the maximum number of independent triangles of $K_{n_{1}, n_{2}, \ldots, n_{r}}$.

\begin{proof}

{\bf Lower bound:} Take $t$ independent triangles $T_{1}, T_{2}, \ldots, T_{t}$ and let the remaining vertices be $v_{1}, v_{2}, \ldots, v_{l}$ and $U=\cup_{j=1}^{t}V(T_{j})$, where $l=n_{1}+n_{2}+\cdots+n_{r}-3t$. First, color those triangles rainbow. Second, for $2\le i\le t$, we color all the edges between $\cup^{i-1}_{j=1}V(T_{j})$ and $V(T_{i})$ with one new color.  Finally, for $1\le i\le l$, we color all the edges between $U\cup\{v_{1}, \ldots, v_{i-1} \}$ and $v_{i}$ with one new color. In such way, we use exactly $4t-1+l=n_{1}+n_{2}+\cdots+n_{r}+t-1$ colors, and there is no rainbow copy of $C_{4}$.

{\bf Upper bound:} We will prove the upper bound by induction on $n=n_{1}+n_{2}+\cdots+n_{r}$. The base case $n_{1}= n_{2}=\cdots =n_{r}=1$ is true by Theorem 1. Assume that it holds for all integers less than $n$. For an $(n+t)$-edge coloring $c$ of $K_{n_{1}, n_{2}, \ldots, n_{r}}$, suppose there is no rainbow $C_{4}$, we have $d^{c}(x)\ge 2$ for any vertex $x\in V(K_{n_{1}, n_{2}, \ldots, n_{r}})$. Otherwise, by the induction hypothesis, we have $$|C(K_{n_{1}, n_{2}, \ldots, n_{r}}-x)|\ge n+t-1\ge ar(K_{n_{1}, n_{2}, \ldots, n_{r}}-x, C_{4})+1,$$ and there is a rainbow $C_{4}$ in $K_{n_{1}, n_{2}, \ldots, n_{r}}-x$, a contradiction.

{\bf Claim 1.} For any vertex $x\in V$, if $c(xy)$ and $c(xz)$ are two distinct colors which are starred at $x$, then $y$ and $z$ are in different parts.

{\bf Proof of Claim 1.} Assume that $y$ and $z$ are in the same part. Since $d^{c}(y)\ge 2$, there is at least one edge $yw$ such that $c(yw)$ is starred at $y$ and $c(yw)\ne c(xy)$. Note that $c(zw)\notin \{c(xy), c(xz), c(yw)\}$, thus $xywzx$ is a rainbow $C_{4}$, a contradiction. \q

{\bf Claim 2.} For any vertex $x\in V$, if $c(xy)$ is starrted at $x$, then $xy$ is the unique edge with color $c(xy)$.

{\bf Proof of Claim 2.} Assume that $xy$ is not the unique edge with color $c(xy)$, then $c(xy)$ is not starred at $y$. Since $d^{c}(x)\ge 2$, there is a vertex $z$ such that $c(xz)$ is starred at $x$ and $c(xz)\ne c(xy)$. By Claim 1, $y$ and $z$ are in different parts. Since $d^{c}(y)\ge 2$, there are two vertices $w_{1}, w_{2} (\ne x)$ such that $c(yw_{1})$ and $c(yw_{2})$ are starred at $y$ and $c(yw_{1})\ne c(yw_{2})$. By Claim 1, $w_{1}$ and $w_{2}$ are in different parts. Thus, there is at least one of $w_{1}$ and $w_{2}$, we say $w_{1}$, such that $z$ and $w_{1}$ are in different parts. Note that  $c(zw_{1})\notin \{c(xy), c(xz), c(yw_{1})\}$, thus $xyw_{1}zx$ is a rainbow $C_{4}$, a contradiction. \q

{\bf Claim 3.} For any vertex $x\in V$, $d^{c}(x)=2$.

{\bf Proof of Claim 3.} Assume that there is a vertex $x$ such that $d^{c}(x)\ge 3,$ there are three edges $xy, xz$ and $xw$ such that $c(xy), c(xz)$ and $c(xw)$ are starred at $x$ and distinct. By Claim 1, $y, z$ and $w$ are in different parts. Since $d^{c}(y)\ge 2$, there is a vertex $u$ such that $c(yu)$ is starred at $y$ and $x, u$ are in different parts. Also, there is  at least one of $z$ and $w$, we say $z$, such that $u, z$ are in different parts.  Note that  $c(zu)\notin \{c(xy), c(xz), c(yu)\}$, thus $xyuzx$ is a rainbow $C_{4}$, a contradiction. \q

Consider the spanning subgraph $G$ of $K_{n_{1}, n_{2}, \ldots, n_{r}}$ such that $e\in E(G)$ if and only if $e$ is the unique edge with color $c(e)$.  By Claims 1, 2 and 3, $G$ is $2$-regular and each component of $G$ is a cycle .

{\bf Claim 4.} For each component $C_{l}=x_{1}x_{2}\cdots x_{l}x_{1}$ of $G$, $x_{i}$ and $x_{i+3}$ belong to the same part of $K_{n_{1}, n_{2}, \ldots, n_{r}}$ (if $i+3>l$, $x_{i+3}$ means $x_{i+3-l}$), for all $1\le i\le l$. Furthermore, $t=\frac{n}{3}$.

{\bf Proof of Claim 4.}  Suppose that there exists $1\le i\le l$ such that $x_{i}$ and $x_{i+3}$ belong to the distinct parts of $K_{n_{1}, n_{2}, \ldots, n_{r}}$,  then $x_{i}x_{i+1}x_{i+2}x_{i+3}x_{i}$ is a rainbow $C_{4}$, a contradiction. Thus $l\equiv 0\pmod 3$ and $K_{n_{1}, n_{2}, \ldots, n_{r}}[V(C_{l})]$ contains exactly $\frac{l}{3}$ independent triangles. Since $G$ is a spanning subgraph of $K_{n_{1}, n_{2}, \ldots, n_{r}}$, we have $t=\frac{n}{3}$. \q

Since $t=\frac{n}{3}$, the maximum number of independent triangles of $K_{n_{1}, n_{2}, \ldots, n_{r}}-x$ is $\frac{n-3}{3}$, for all $x\in V$. Thus, by the induction hypothesis, we have $$|C(K_{n_{1}, n_{2}, \ldots, n_{r}}-x)|= n+t-d^{c}(x)=n+t-2= ar(K_{n_{1}, n_{2}, \ldots, n_{r}}-x, C_{4})+1,$$ and there is a rainbow $C_{4}$ in $K_{n_{1}, n_{2}, \ldots, n_{r}}-x$,  a contradiction.

\end{proof}

\n{\large\bf 4. Open problems}

We notice that the lower bound of Theorem 6 is also the lower bound for odd cycles, and we would conjecture the lower bound is the exact value when the number of the vertex of each part is sufficient large.

\begin{conjecture}
For $r\ge 3$, $k\ge 1$ and $n_{1}\ge n_{2}\ge \cdots\ge n_{r}\gg k$, we have
 \begin{align}\nonumber
  ar(K_{n_{1}, n_{2}, \ldots, n_{r}}, C_{2k+1})=
 \begin{cases}
 n_{1}n_{2}+n_{3}n_{4}+ \cdots+n_{r-2}n_{r-1}+n_{r}+\frac{r-1}{2}-1,& r\, \text{ is odd};\\
n_{1}n_{2}+n_{3}n_{4}+ \cdots+n_{r-1}n_{r}+\frac{r}{2}-1,& r \,\text{ is even}.
 \end{cases}
 \end{align}
\end{conjecture}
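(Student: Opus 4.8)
The plan is to reuse the extremal colorings constructed in the proof of Theorem 6. Write $f(n_1,\dots,n_r)$ for the right-hand side of the conjecture. Those colorings use exactly $f(n_1,\dots,n_r)$ colors and contain no rainbow $C_3$, so by the contrapositive of Lemma 1 they contain no rainbow multipartite cycle whatsoever. Now observe that any copy of $C_{2k+1}$ is multipartite in the sense of Definition 1: the subgraph induced by two parts $V_i\cup V_j$ is complete bipartite and hence has no odd cycle, so every $C_{2k+1}$ must meet at least three parts. Consequently no $C_{2k+1}$ is rainbow under these colorings, which already yields $ar(K_{n_1,\dots,n_r},C_{2k+1})\ge f(n_1,\dots,n_r)$. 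Thus the entire content of the conjecture is the matching upper bound.

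\textbf{Upper bound, overall scheme.} I would prove the upper bound by induction on $k$, the base case $k=1$ being exactly Theorem 6. Since the target value $f(n_1,\dots,n_r)$ does not depend on $k$ and $n_r\gg k$ forces $n_r\gg k-1$, the inductive hypothesis applies verbatim to the \emph{same} host graph: every coloring with $f+1$ colors contains a rainbow $C_{2k-1}$. Given a coloring $c$ with $f+1$ colors, fix a rainbow $(2k-1)$-cycle $D=x_1x_2\cdots x_{2k-1}x_1$. The goal is to lengthen $D$ to a rainbow $C_{2k+1}$ by deleting one edge $x_ix_{i+1}$ and inserting a rainbow path $x_i\,p\,q\,x_{i+1}$ of length $3$ through two vertices $p,q\notin V(D)$ chosen in suitable parts. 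This keeps the cycle odd and multipartite, and the only colors to be avoided on the three new edges are the $2k-2$ colors of the retained edges of $D$. Since there are $f+1$ colors in total (of order $n_1n_2$) and every part has $\gg k$ vertices, there is, heuristically, an overwhelming supply of candidate placements for $p,q$ and of admissible fresh colors.

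\textbf{The main obstacle.} The difficulty is that an adversary can locally defeat any fixed insertion edge: by \emph{starring} one of the retained cycle-colors at $x_i$ (giving almost every edge at $x_i$ that one color), every candidate edge $x_ip$ becomes forbidden, and symmetrically at $x_{i+1}$, so no edge of $D$ can be extended blindly. The resolution I would pursue rests on a scarcity principle: a vertex whose incident edges are dominated by a single color contributes almost nothing to the palette, so with $f+1$ colors present only few vertices can be \emph{saturated} in this way, and the bulk of the vertices are \emph{color-rich}, meaning their neighborhoods into any fixed part realize many distinct colors. At a color-rich endpoint a counting argument forces an edge $x_ip$ whose color avoids the bounded forbidden set; a second application at the other endpoint, together with a choice of the middle vertex $q$ among $\gg k$ candidates, completes the ear. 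The crux, and the step I expect to be hardest, is to \emph{route} the rainbow $(2k-1)$-cycle so that some edge $x_ix_{i+1}$ has both endpoints color-rich; I would handle this by strengthening the inductive statement to produce a rainbow $C_{2k-1}$ containing such an edge (using that saturated, color-poor vertices can be avoided when $n_r\gg k$) and by carrying the saturation bookkeeping through the induction. An alternative that sidesteps the endpoint stars is to build the ear around a \emph{fresh} middle edge $pq$ off $D$ (a color not among the $2k-1$ used by $D$, hence abundant) and attach its ends to $x_i$ and $x_{i+1}$; the same saturation analysis then governs whether the two attaching edges admit fresh, distinct colors, and turning this qualitative picture into uniform bounds valid for all $k$ and all admissible part sizes is where the real work lies.
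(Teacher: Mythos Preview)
This statement is \emph{Conjecture 1} in the paper; the paper does not prove it and offers no argument for the upper bound at all. So there is no ``paper's own proof'' to compare your proposal against.

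Your lower bound is correct and is exactly the observation the paper makes in Section~4 (``the lower bound of Theorem~6 is also the lower bound for odd cycles''): the Theorem~6 colorings contain no rainbow $C_3$, hence by the contrapositive of Lemma~1 no rainbow multipartite cycle, and every odd cycle in a multipartite host is multipartite. Nothing more is needed there.

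For the upper bound you are attempting to settle an open problem, and what you have written is a program rather than a proof. The genuine gap is the one you yourself flag: the ``saturation''/``color-rich'' dichotomy is not established, and in fact the counting you gesture at does not obviously go through. The total number of colors is of order $n_1n_2$, not of order $n$, so the naive pigeonhole ``a saturated vertex contributes almost nothing to the palette, hence there are few saturated vertices'' fails---a single color can be starred at a vertex $x$ while simultaneously appearing on many edges elsewhere, so saturated vertices need not be rare. Likewise, the strengthened induction (produce a rainbow $C_{2k-1}$ with a color-rich edge) is exactly the kind of statement whose proof would already require the structural control you are trying to obtain. Your alternative ear-via-fresh-middle-edge runs into the same endpoint-attachment problem. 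None of this means the strategy is wrong, but as written it does not reduce the conjecture to anything known, and the paper itself leaves the upper bound entirely open.
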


Also, it is interesting to investigate the anti-Ramsey number of even cycles in the complete $r$-partite graphs.

 \begin{problem}

 For $r\ge 3$ and $k\ge 3$, determine $ar(K_{n_{1}, n_{2}, \ldots, n_{r}}, C_{2k})$.

 \end{problem}

Finally we give the following conjecture which strengthens Lemma 4.

\begin{conjecture}
For $r\ge 3$ and $n_{1}\ge n_{2}\ge \cdots\ge n_{r}\ge 1$, if $G\subset K_{n_{1}, n_{2}, \ldots, n_{r}}$ and
\begin{align}\nonumber
|E(G)|\ge
 \begin{cases}
 n_{1}n_{2}+n_{3}n_{4}+ \cdots+n_{r-2}n_{r-1}+n_{r}+\frac{r-1}{2},& r\, \text{ is odd};\\
n_{1}n_{2}+n_{3}n_{4}+ \cdots+n_{r-1}n_{r}+\frac{r}{2},& r \,\text{ is even},
 \end{cases}
 \end{align}
 then $G$ contains a multipartite cycle of length no more than $\frac{3}{2}r$.
\end{conjecture}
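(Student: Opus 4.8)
The plan is to prove the contrapositive in the form of a refined Tur\'an-type bound: if $G\sb K_{n_1,\ldots,n_r}$ contains no multipartite cycle of length at most $\frac32 r$, then $|E(G)|\le f(n_1,\ldots,n_r)$, where $f$ is exactly the bound of Lemma 4. Since the extremal graph of Lemma 4 contains no multipartite cycle at all, this would show that forbidding only the \emph{short} multipartite cycles is already as restrictive as forbidding all of them, which is precisely the conjecture. First I would take a $G$ that is edge-maximal subject to having no multipartite cycle of length at most $\frac32 r$, and argue as in Claim 1 of Lemma 4 that $G$ is connected (adding an edge between two components creates no new cycle, hence no short multipartite cycle, contradicting maximality).

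The heart of the argument is a block-graph accounting that explains the constant $\frac32 r$. Reducing to the case where every block is a complete bipartite graph between two of the parts (see the obstacle below), one passes to the block-graph whose cycles correspond to multipartite cycles of $G$. A cycle of the block-graph through $k$ blocks can be realized in $G$ by a multipartite cycle that spends at most $3$ edges per block, namely two edges for a diameter-$2$ detour inside a complete bipartite block and one connector to the next block, so its length is at most $3k$. Because the $r$ parts are covered by at most $\lfloor r/2\rfloor$ such blocks, we get $3k\le \frac32 r$. Thus \emph{every} cycle of the block-graph would produce a multipartite cycle of length at most $\frac32 r$; forbidding these forces the block-graph to be a forest, which is exactly the situation of Lemma 4, and the counting there yields $|E(G)|\le f$. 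This accounting is tight: arranging $\lfloor r/2\rfloor$ complete bipartite blocks cyclically, with one connector per consecutive pair and the two connectors at each block placed so that their feet lie at distance $2$ inside it, forces the shortest multipartite cycle to have length exactly $3\cdot\frac r2=\frac32 r$, which is why no smaller threshold can work.

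The main obstacle is the reduction to complete bipartite blocks, which does not hold verbatim as in Lemma 4: a block may genuinely touch three or more parts without containing any short multipartite cycle, for instance the hexagon $C_6\sb K_{2,2,2}$ is a single block on three parts whose only multipartite cycle has length $6>\frac32\cdot 3$. The point to establish is that such blocks are \emph{edge-efficient}: a $2$-connected subgraph meeting $s\ge 3$ parts and having no multipartite cycle of length at most $\frac32 r$ carries no more edges than a complete bipartite block on the same vertex set would, and can therefore be compressed to complete bipartite blocks without decreasing $|E(G)|$ and without creating a short multipartite cycle. I would attempt this by a shifting/compression argument in the spirit of Lemma 2, combined with the density observation that drives the small cases: once a block whose parts contain a complete bipartite pair $V_i,V_j$ acquires a vertex of a third part with two neighbors, the diameter-$2$ shortcut inside $V_i\cup V_j$ closes a multipartite cycle of length at most $4$. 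Making this compression rigorous while simultaneously controlling the per-block contribution, so that each compressed block still costs at most $3$ edges of any realized cycle, and checking the odd-$r$ and pendant-part cases exactly as in the proof of Lemma 4, is where the difficulty concentrates and is the step I expect to be genuinely hard.
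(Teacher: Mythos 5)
You should first note that the statement you were asked to prove is Conjecture~2 of the paper: the authors give no proof of it (they only remark that the case $r=3$ was settled in separate work by Fang, Gy\H ori, Xiao and Xiao), so there is no paper proof to compare against, and your attempt must stand on its own. It does not: you yourself identify the compression of blocks meeting three or more parts as ``the step I expect to be genuinely hard'' and leave it unresolved. That step is not a technical detail to be deferred --- it is the entire content of the conjecture. Once every block of $G$ is a complete bipartite graph between two parts, $G$ contains no multipartite cycle \emph{at all} and Lemma~4 applies verbatim; so everything the conjecture adds beyond Lemma~4 is concentrated exactly in the case you did not handle.

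There is also a structural error in the accounting that is meant to explain the constant $\frac{3}{2}r$. Every cycle of a graph lies inside a single block, and the block--cut-vertex structure of any graph is already a forest; hence there are no ``cycles of the block-graph'' to forbid, the claim that forbidding short multipartite cycles ``forces the block-graph to be a forest'' is vacuous, and a multipartite cycle spending ``at most $3$ edges per block'' across $k$ distinct blocks can never occur in a graph whose blocks are as you describe. Your own tightness example exposes the problem when read the other way: a cyclic chain of $\lfloor r/2\rfloor$ complete bipartite graphs joined by connectors is not a cycle through many blocks --- it is one single $2$-connected block meeting all $r$ parts, whose shortest multipartite cycle has length about $\frac{3}{2}r$. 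Together with $C_{6}\subset K_{2,2,2}$, this shows that $2$-connected subgraphs meeting three or more parts while avoiding short multipartite cycles are the \emph{generic} shape of the extremal problem, not degenerate exceptions, and neither the shifting of Lemma~2 (whose hypothesis, no multipartite $P_{3}$, is far stronger) nor the cut-vertex surgery of Lemma~4 gives any control over their edge counts. So the proposal has a genuine gap precisely where the conjecture is hard, consistent with the statement being left open in the paper.
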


Fang, Gy\H ori, Xiao and Xiao recently proved that  Conjecture 2 holds for $r=3$.

\vskip.2cm

\end{document}